\newtheorem{theorem}{Theorem}[section]
\newtheorem{defn}[theorem]{Definition}
\newtheorem{fact}[theorem]{Fact}
\newtheorem{cor}[theorem]{Corollary}
\newtheorem{lemma}[theorem]{Lemma}
\newtheorem{claim}[theorem]{Claim}
\newtheorem{example}[theorem]{Example}
\newtheorem{question}[theorem]{Question}
\providecommand{\keywords}[1]{\textbf{\textit{Key words.}} #1}
\renewcommand{\epsilon}{{\varepsilon}}
\newcommand{\Z}{{\mathbb{Z}}}
\newcommand{\E}{{\mathbb{E}}}
\title{Tight minimum colored degree condition for rainbow connectivity}
\author{Andrzej Czygrinow\thanks{aczygri@asu.edu} and Xiaofan Yuan\thanks{xiaofan.yuan@asu.edu}}
\affil{School of Mathematical and Statistical Sciences, Arizona State University}
\date{}
\begin{document}

\maketitle
\begin{abstract}
Let $G = (V,E)$ be a graph on $n$ vertices, and let $c : E \to P$, where $P$ is a set of colors.  
Let $\delta^c(G) = \min_{v \in V} \{ d^{c}(v) \}$ where $d^c(v)$ is the number of colors on edges incident to a vertex $v$ of $G$.  In 2011, Fujita and Magnant showed that if $G$ is a graph on $n$ vertices that satisfies $\delta^c(G)\geq n/2$, then for every two vertices $u, v$ there is a properly-colored $u,v$-path in $G$.
In this paper, we show that the same bound for $\delta^c(G)$ implies that any two vertices are connected by a rainbow path.
\end{abstract}

\keywords{rainbow paths, rainbow connectivity, graph regularity}

\section{Introduction} 

An \textit{edge-colored graph} is a pair $(G, c)$, where $G = (V, E)$ is a graph and $c: E \to P$ is a function mapping edges to a color-set $P$. A subgraph $H \subseteq G$ is a \textit{rainbow subgraph} if all edges of $H$ receive distinct colors in $c$. 
In an edge-colored graph $(G, c)$, let $d^c(v)$ be the number of colors on the edges incident to $v$, and let $\delta^c(G):=\min_{v\in V(G)} d^c(v)$. 
For $k\in \mathbb{Z}^+$, an edge-colored graph $(G,c)$ is called \textit{rainbow $k$-connected} if for any two distinct vertices $u,v$ in $G$ there are $k$ internally disjoint $u,v$-paths in $G$ such that the subgraph induced by the paths is rainbow. If $(G,c)$ is $1$-rainbow connected, then we call it \textit{rainbow-connected}.
In this paper, we will give a tight sufficient condition for $\delta^c(G)$ that implies $G$ is rainbow-connected.

Recently, there has been a lot of interest in studying rainbow and properly-colored structures in edge-colored graphs.
The meta-question that can be considered in this context is the following. For a property $T$ and $n\in \mathbb Z^+$, determine the least $\delta(n,T)$ such that if an edge-colored graph $G$ of order $n$ satisfies $\delta^c(G)\geq \delta(n,T)$, then $G$ has property $T$. For example, H. Li \cite{H-Li} proved that if $T$ is the existence of a rainbow triangle, then $\delta(n,T)=({n+1})/{2}$ and coincides with the bound for the minimum degree in an uncolored graph that results in a triangle. However, the bounds for uncolored structures and their rainbow analogs can be far apart. For example, as proved in \cite{CNOM},  one needs {$\delta(n,T)\geq ({n+5})/{3}$} when $T$ is the existence of a rainbow cycle of length four.

The situation is much better understood if an underlying edge-coloring of a graph $G$ is proper. Keevash, Mubayi, Sudakov, and Verstra\"{e}te \cite{KMSV} introduced rainbow Tur\'{a}n numbers and showed that when confined to proper edge-colorings, the rainbow Tur\'{a}n number of a fixed non-bipartite graph $H$ is approximately the same as the regular Tur\'{a}n number of $H$.
We refer to the survey of Fujita et al. \cite{magnant-survey} for a comprehensive overview of rainbow-related problems.

In this paper, we are interested in the case when $T$ is the property of being rainbow-connected. 
Problems on properly-colored connectivity have also been considered.  For a colored graph $(G,c)$ we say that $G$ is \textit{properly connected} if for any two distinct vertices $u,v$ there is a properly-colored $u,v$-path in $G$.
In \cite{FM}, Fujita and Magnant gave a tight bound for $\delta^c(G)$ that implies proper connectivity. Specifically, they proved the following fact.
\begin{theorem}(\cite{FM})\label{thm-FM}
Let $(G,c)$ be an edge-colored graph of order $n\geq 3$. If $\delta^c(G) \geq n/2$, then $G$ is properly connected.
\end{theorem}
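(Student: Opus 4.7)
Fix two distinct vertices $u,v\in V(G)$; the goal is to construct a properly-colored $u,v$-path. A preliminary observation is that $d(x)\ge d^c(x)\ge n/2$ for every vertex $x$, so $G$ inherits a Dirac-type minimum degree bound. In particular, if $uv\notin E(G)$ then
$$|N(u)\cap N(v)|\ge d(u)+d(v)-(n-2)\ge 2.$$

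If $uv\in E(G)$, the single edge is the required path, so assume $uv\notin E(G)$ and set $W:=N(u)\cap N(v)$. If some $w\in W$ satisfies $c(uw)\ne c(wv)$, then $u\,w\,v$ is properly colored and we are done. Hence we may assume $c(uw)=c(wv)=:\alpha_w$ for every $w\in W$, and the task reduces to producing a properly-colored path of length at least three in the presence of this obstruction.

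Fix any $w\in W$ and write $\alpha:=\alpha_w$. Since $d^c(w)\ge n/2$, the edges at $w$ carry at least $n/2-1$ colors different from $\alpha$, so there are many neighbors $y$ of $w$ with $c(wy)\ne\alpha$. I would try to pick such a $y$ that is also adjacent to $v$ with $c(yv)\ne c(wy)$; then $u\,w\,y\,v$ is properly colored. A symmetric construction $u\,y\,w\,v$ with $y\in N(u)$ and the middle edge on the $u$-side is also available. A direct counting argument combining $d^c(u)\ge n/2$, $d^c(v)\ge n/2$, and the large color-degree at $w$ should produce the required $y$.

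The main obstacle is the extremal case where both of these length-three constructions fail for every choice of $w\in W$. There every neighbor $y$ of $w$ with $c(wy)\ne\alpha_w$ either misses $N(v)$ entirely or satisfies $c(yv)=c(wy)$, and an analogous rigidity holds on the $u$-side; these constraints must hold simultaneously for all $w\in W$. Extracting enough structure from this joint condition to force $d^c(u)<n/2$ or $d^c(v)<n/2$, contradicting the hypothesis, is the technical heart of the argument and is precisely where the tightness of the bound $n/2$ should be exploited.
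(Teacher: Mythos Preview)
The paper does not give its own proof of this statement: Theorem~\ref{thm-FM} is quoted from Fujita and Magnant \cite{FM} as background, with no argument supplied here. So there is nothing in the paper to compare your proposal against.

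On its own merits, what you have written is not a proof but an outline that stops at the hard step. You correctly observe that the Dirac bound forces $|N(u)\cap N(v)|\ge 2$ when $uv\notin E(G)$, and you identify the obstruction where every common neighbor $w$ has $c(uw)=c(wv)$. But you then say the length-three construction ``should'' go through by counting, and you describe the extremal case as ``the technical heart of the argument'' without actually carrying it out. That is precisely the content of the theorem: the extremal example in the paper (the regular-tournament construction at $\delta^c=(n-1)/2$) shows that colorings in which every $u\,x_i\,x_j\,v$ path fails to be proper are only barely excluded by the hypothesis $\delta^c\ge n/2$, so the missing counting argument is not a formality but the whole point.

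There is also a structural gap in your plan: you commit to finding a properly-colored path of length at most three, but you have given no justification that such a short path must exist under the hypothesis. The Fujita--Magnant theorem only asserts existence of \emph{some} properly-colored path, and their argument does not proceed by bounding the length at three. If you want to pursue your bounded-length route you must either prove that length three always suffices (which would be a sharpening of their result and requires real work), or abandon the length restriction and argue differently.
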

The following example shows that the bound in Theorem \ref{thm-FM} is the best possible.
\begin{example}(\cite{FM})
Let $n$ be odd. Consider the tournament $T$ obtained from a regular tournament on a set $V$ with $|V|=n-2$ by adding two new vertices $x,y$ and all arcs of the form $zw$ where $z\in\{x,y\}$ and $w\in V$. Now, let the edge-colored graph $G$ be obtained by coloring arc $uv$ with color $c_v$, where $c_v\neq c_u$ for different $u$ and $v$. We have $\delta^c(G)=\frac{n-1}{2}$ and there is no properly-colored $x,y$-path in $G$. 
\end{example}
Cheng, Kano, and Wang \cite{CKW} showed a stronger version of Theorem~\ref{thm-FM} and proved that 
if $G$ is a graph of order $n$ such that $\delta^c(G)\geq n/2$, then $G$ contains a properly colored spanning tree.
{However, it is not possible to give a non-trivial bound for $\delta^c(G)$ that would imply that $G$ has a rainbow spanning tree. Indeed, when $n$ is even, a disjoint union of $n-2$ perfect matchings, where each matching receives a unique color, does not have a rainbow spanning tree. }
On the other hand, if an edge-coloring of a graph $G$ of order $n$ with $\delta^c(G)\geq n/2$ satisfies additional assumptions, then a rainbow spanning tree exists. For example, Cheng et al. \cite{CKW} proved that if $G$ is an edge-colored graph of order $n$ such that $\delta^c(G)\geq n/2$ and for every color class $C$ the number of vertices in $G[C]$ is at most  $n/2+1$, then $G$ has a rainbow spanning tree.  This immediately gives the aforementioned result on properly colored spanning trees, and also implies that if $\delta^c(G)\geq n/2$ and every color class is an induced matching, then $G$ has a rainbow spanning tree (initially proved in \cite{CSTW}).  
The proof in \cite{CKW} relies on the fact proved by Akbari and Alipour \cite{AA} and independently by Suzuki \cite{Suz} who showed that an edge-colored connected graph $G$ of order $n$ has a rainbow spanning tree if and only if for every $1\leq r\leq n-2$ and every set of $r$ colors $C$ removing all edges of colors from $C$ results in a graph with at most $r+1$ components.

Surprisingly, in the case of rainbow connectivity, no additional assumptions are needed. In fact, we can
prove that if $\delta^c(G)\geq n/2$ then for every two distinct vertices $u,v$ in $G$, there is a rainbow $u,v$-path of length $O(1)$ as following.  

\begin{theorem}\label{main}
There exists $n_0\in \mathbb{Z}^+$ such that, for every $n\geq n_0$,
if $(G, c)$ is an edge-colored graph of order $n$ and $\delta^c(G) \geq n/2$, then any two distinct vertices are connected by a rainbow path of length at most nine.
\end{theorem}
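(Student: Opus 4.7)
The plan is to combine Szemerédi's regularity lemma with a careful selection of rainbow neighborhoods at the endpoints. First, for each vertex $v$ select a \emph{rainbow star} $R(v) \subseteq E(G)$ by picking, for every color present at $v$, a single incident edge of that color; then $|R(v)| = d^c(v) \geq n/2$. Second, fix a small $\epsilon > 0$ and apply the regularity lemma to the underlying graph $G$ to obtain an $\epsilon$-regular partition $V_0, V_1, \ldots, V_k$ with $|V_0| \leq \epsilon n$ and cluster size $m$. Form the reduced graph $\Gamma$ on $[k]$ in which $ij$ is an edge whenever $(V_i, V_j)$ is $\epsilon$-regular of density at least some fixed constant $d$. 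Because $\delta(G) \geq \delta^c(G) \geq n/2$, a standard edge-counting argument yields $\delta(\Gamma) \geq (1/2 - \eta)k$ for some $\eta = \eta(\epsilon,d) \to 0$, and in particular $\Gamma$ has diameter at most $2$.

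Call a cluster $V_i$ \emph{rich for $v$} if $v$ has at least $\alpha m$ rainbow-star neighbors inside $V_i$ realizing at least $\alpha m$ distinct colors, for a suitable small constant $\alpha$. A pigeonhole argument using $|R(v)| \geq n/2$ and $|V_0| \leq \epsilon n$ shows that most clusters are rich for both $u$ and $v$. Given the endpoints, pick clusters $V_a$ rich for $u$ and $V_b$ rich for $v$, and a path $V_a = V_{i_0}, V_{i_1}, \ldots, V_{i_\ell} = V_b$ of length $\ell \leq 2$ in $\Gamma$. Lift this greedily to a rainbow $u$--$v$ path $u, w_0, w_1, \ldots, w_\ell, v$: first pick an edge of $R(u)$ to reach $w_0 \in V_a$ with a fresh color, then at each cluster transition use $\epsilon$-regularity of $(V_{i_j}, V_{i_{j+1}})$ to choose a bridging edge avoiding the $O(1)$ already-used colors, and finish with an edge of $R(v)$ of a fresh color from $w_\ell$ to $v$. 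Since each regular pair contains $\Omega(n^2)$ edges, every step has many valid options, and in the generic case the path length is at most $4$.

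The main obstacle is the pathological endpoints: if $u$ lies in $V_0$, or if $R(u)$ is concentrated on $V_0$ or on clusters that happen not to be rich for $u$, then the single-edge jump from $u$ into a rich cluster fails; the symmetric issue can occur at $v$. Handling these cases requires one or two \emph{preparation edges} at each endpoint to reach a usable starting point, chosen so that the remaining color budget still suffices for the regularity lifting that follows. Accounting for up to two preparation edges on each side, the lifted reduced-graph path of length at most two, and the two bridging edges into and out of it, the total length is at most $9$. The principal work is in the case analysis around these preparation moves, verifying that rich clusters still exist once forbidden colors are removed, that no vertex is reused, and that greedy color avoidance succeeds at every step; once this is resolved, the regularity lifting itself is routine.
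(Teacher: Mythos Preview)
Your proposal has two genuine gaps that make the argument fail as written.

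First, regularizing $G$ itself loses all color information, so the lifting step ``use $\epsilon$-regularity of $(V_{i_j},V_{i_{j+1}})$ to choose a bridging edge avoiding the $O(1)$ already-used colors'' is not justified. An $\epsilon$-regular pair of density $d$ in $G$ can be monochromatic: nothing in the hypothesis prevents all edges between $V_{i_j}$ and $V_{i_{j+1}}$ from receiving a single color, and nothing prevents all edges from a particular $w_j$ into $V_{i_{j+1}}$ from sharing one color (a vertex with $d^c=n/2$ can still have $n/2$ incident edges of the same color). So having $\Omega(n^2)$ edges in the pair does not give you even two colors to choose from. The paper avoids this by regularizing not $G$ but the auxiliary digraph $D_G$ (one out-arc per color per vertex) and the graph $G^*$ of two-way arcs; an $\epsilon$-regular pair of positive density in $D^*$ or $G^*$ automatically yields many \emph{distinctly colored} edges out of any large vertex set, which is exactly what the greedy lifting needs. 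Your rainbow stars $R(v)$ are essentially the out-neighborhoods of $D_G$, but you only invoke them at the two endpoints, not at the internal vertices $w_1,\dots,w_{\ell-1}$.

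Second, the claim that $\delta(\Gamma)\geq(1/2-\eta)k$ forces $\Gamma$ to have diameter at most two is false: $\Gamma$ can be the disjoint union of two cliques of size roughly $k/2$, hence disconnected. This is not a corner case to be patched with preparation edges; it corresponds precisely to the paper's $\beta$-extremal configuration of type~1, and it requires a separate argument exploiting $\delta^c(G)\geq n/2$ (not merely $\delta(G)\geq n/2$) to find a single crossing edge and build rainbow paths inside each side. Likewise the situation where $G^*$ is nearly empty (so $D_G$ is close to a tournament and every regular pair in $G^*$ has density zero) is the paper's extremal type~2 and needs its own delicate analysis. Your sketch does not recognize either of these obstructions, and the ``pathological endpoints'' discussion does not address them: the failure is global, not local to $u$ and $v$. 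The paper's stability split into non-extremal versus two extremal types is doing essential work that your plan omits.
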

To prove Theorem \ref{main}, we use the stability method and argue that either an edge-colored graph has a specific structure or the bound of $(1/2-\epsilon)n$ for the colored degree is already sufficient to guarantee rainbow paths between any two vertices. To prove the latter, given an edge-colored graph $(G,c)$, we consider two auxiliary graphs on $V(G)$, an oriented graph and a graph, both arising from the coloring $c$.  We regularize the two auxiliary graphs simultaneously and analyze the reduced graphs. {Either both of the auxiliary graphs are inherited in the reduced graphs, or one is empty, and we end up in one of the extremal cases.}

For the property of being rainbow $k$-connected, we show that given $k$, there is a $C_k$ such that for $n$ large enough, if $G$ is a graph of order $n$ that satisfies $\delta^c(G)\geq n/2+C_k$, then for any two vertices there are $k$ rainbow paths between them.  
Because the paths obtained in Theorem \ref{main} have length at most {nine}, it is possible to simply iterate  Theorem \ref{main} and prove the following.
\begin{cor}\label{cor-kconn}
For every $k\in \mathbb{Z}^+$ there exists $n_0\in \mathbb{Z}^+$ such that for every $n\geq n_0$, if $(G,c)$ is an edge-colored graph on $n\geq n_0$ vertices and $\delta^c(G)\geq {n}/{2}+17(k-1)$, then $(G,c)$ is rainbow $k$-connected.  
\end{cor}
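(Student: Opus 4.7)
The plan is to build the $k$ rainbow $u,v$-paths greedily, extracting one at a time by applying Theorem~\ref{main} to a successively shrunken subgraph. Fix distinct $u,v \in V(G)$. Suppose inductively that internally disjoint rainbow paths $P_1, \ldots, P_{i-1}$, each of length at most $9$, have been constructed so that $P_1 \cup \cdots \cup P_{i-1}$ is a rainbow subgraph. Let $X$ be the set of internal vertices of these paths and let $C$ be the set of colors they use; then $|X| \leq 8(i-1)$ and $|C| \leq 9(i-1)$, since a path of length at most $9$ has at most $8$ internal vertices and at most $9$ distinct colors. I would then form $G'$ from $G$ by deleting the vertices in $X$ and every edge whose color lies in $C$.

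The heart of the argument is to verify that Theorem~\ref{main} still applies to $G'$. Deleting the vertices of $X$ can wipe out at most $|X|$ colors at any surviving vertex $w$ (if every edge of a given color at $w$ happens to land in $X$), and then deleting the used colors subtracts at most $|C|$ more, so
\[
d^{c}_{G'}(w) \;\geq\; d^{c}_{G}(w) - |X| - |C| \;\geq\; \frac{n}{2} + 17(k-1) - 17(i-1) \;\geq\; \frac{n}{2} \;\geq\; \frac{|V(G')|}{2}.
\]
Choose $n$ large enough that $|V(G')| \geq n - 8(k-1) \geq n_0$, where $n_0$ is the constant from Theorem~\ref{main}. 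Then Theorem~\ref{main} produces a rainbow $u,v$-path $P_i$ of length at most $9$ in $G'$. By construction $P_i$ avoids $X$, so it is internally disjoint from $P_1, \ldots, P_{i-1}$, and it uses only colors outside $C$, so $P_1 \cup \cdots \cup P_i$ is still a rainbow subgraph. Iterating this step for $i=1,\ldots,k$ yields the desired $k$ internally disjoint rainbow paths whose union is rainbow.

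The choice of buffer $17(k-1) = (8+9)(k-1)$ in the hypothesis is tailored exactly to this bookkeeping: each previously built path can cost up to $8$ internal vertices plus $9$ colors from the colored degree at a remaining vertex. The only step that demands any care is this degree accounting; in particular, one must remember that removing a vertex can annihilate a color at $w$ entirely (not merely reduce its multiplicity), which is why the $-|X|$ term appears in the displayed inequality. Once that is tracked, the corollary reduces to $k$ straightforward iterations of Theorem~\ref{main} and requires no further ingredient.
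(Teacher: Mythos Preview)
Your proof is correct and follows essentially the same approach as the paper's own argument: iterate Theorem~\ref{main} while stripping out the internal vertices and the used colors after each step, and observe that each stripped path costs at most $8+9=17$ in colored degree. Your write-up is in fact more careful than the paper's, since you make explicit both the condition $|V(G')|\geq n_0$ and the inequality $n/2\geq |V(G')|/2$ needed to reapply Theorem~\ref{main}.
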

\begin{proof} 
Fix $u\neq v$. We will apply Theorem \ref{main} $k$ times to $u,v$, modifying $G$ after each iteration. For the $i^{th}$ iteration, suppose $\delta^c(G)\geq {n}/{2}+ 17(k-i)$ and let $P_i$ denote a rainbow $u,v$-path of length at most nine. Remove all internal vertices in $P_i$ from $G$ and all 
edges of $G$ in the colors used by the edges of $P_i$.
Then for every vertex $w$ there are at most $(8+9)$ colors removed on edges incident to $w$ in each iteration.
Consequently, after the modification, we have $\delta^c(G)\geq {n}/{2}+ 17(k-i-1).$ 
\end{proof}
In the next section, we introduce more notation and terminology. Section \ref{reg-section} contains a discussion of the regularity-related concepts, the non-extremal case is addressed in Section \ref{non-section}, and the extremal configurations are analyzed in Section \ref{ext-section}.

\section{Preliminaries}\label{prem-section}

{Let $G=(V, E)$ be a graph and $D=(V, F)$ be a di-graph. For a vertex $v\in V$, let $N_G(v)$ denote the set of neighbors of $v$ in $G$, and let $N^+_D(v)$ ($N^-_D(v)$) be the set of out-neighbors (in-neighbors) of $v$ in $D$. 
Let $d_G(v) := |N_G(v)|$, $d^+_D(v) := |N^+_D(v)|$, and $d^-_D(v) := |N^-_D(v)|$.

Let $\delta(G) := \min_{v \in V} |N_G(v)|$ and $\delta^+(D):= \min_{v \in V} |N_D^+(v)|$. If there is an arc from $u$ to $v$ in $D$, we denote it by $uv$, and if there is an edge between $u$ and $v$ in $G$, we denote it by $\{u,v\}$. If $X, Y$ are subsets of $V(D)$, then let $E_D(X,Y)$ denote the set of arcs $xy$ in $D$ such that $x\in X$ and $y\in Y$. Let $D[X, Y]$ denote the subdigraph of $D$ defined by $D[X, Y]:=(X\cup Y, E_D(X,Y))$. Similarly, for a graph $G$, $E_G(X,Y)$ denotes the set of edges with one endpoint in $X$ and the other in $Y$, and $G[X, Y]:=(X\cup Y, E_G(X,Y))$.

Let $P$ be a set of colors and let $(G,c)$ be an edge-colored graph on $n$ vertices where $c:E(G)\rightarrow P$.}  

For a vertex $v\in V(G)$, let   $d^c(v)$ be the number of colors on the edges incident to $v$, and let $\delta^c(G):=\min_{v\in V(G)} d^c(v)$. We will always assume that $(G,c)$ satisfies  $\delta^c(G)\geq \frac{n}{2}$ and subject to this condition, $E(G)$ is minimal. As a result, $G$ does not contain a monochromatic cycle of any length and does not contain a monochromatic path of length three. 
Indeed, if $uvwz$ is a monochromatic path of length three, then deleting $uw$ from $G$ does not affect the minimum colored degree of $G$. Similarly, if $uvz$ is a monochromatic triangle, we can remove $uv$. Consequently, for every color $\alpha \in c(E(G))$, the graph $F_{\alpha}:= (V(G), c^{-1}(\alpha))$ is a star forest. 
We will consider a digraph associated with $(G,c)$ that will be critical to our analysis of the rainbow connectivity. 
\begin{defn}\label{defnD}
Let $(G,c)$ be a graph on $n$ vertices. Let $D_G:=(V(G), E)$ be the digraph obtained as follows. For every $\alpha \in c(E(G))$ and every $v\in V(G)$ that satisfies $1\leq d_{F_{\alpha}}(v)\leq \sqrt{n}$, choose one vertex $w\in N_{F_{\alpha}}(v)$ and add $vw$ to E.  
\end{defn}
  It is not difficult to see the following fact.
\begin{fact}\label{fact-outdeg}
Let $(G,c)$ be an edge-colored graph on $n$ vertices such that $\delta^c(G)\geq \frac{n}{2}$, and subject to this $E(G)$ is minimal. Then $\delta^+(D_G)> n/2 -\sqrt{n}.$
\end{fact}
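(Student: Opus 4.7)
The plan is to bound the out-degree $d^+_{D_G}(v)$ of each vertex $v\in V(G)$ by accounting for the $d^c(v)\geq n/2$ distinct colors incident to $v$ and showing that all but fewer than $\sqrt{n}$ of them contribute a distinct out-arc at $v$.

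First I would split the colors incident to $v$ into two classes: the \emph{small} colors, for which $1\leq d_{F_\alpha}(v)\leq \sqrt{n}$, and the \emph{large} colors, for which $d_{F_\alpha}(v)>\sqrt{n}$. By Definition~\ref{defnD}, each small color at $v$ contributes exactly one out-arc at $v$, whereas large colors contribute none. Moreover, two distinct small colors $\alpha\neq\beta$ cannot select the same neighbor $w$: if $w$ were chosen for both, then $c(vw)$ would simultaneously equal $\alpha$ and $\beta$, contradicting the fact that $c$ is a function on $E(G)$. Hence the out-neighbors produced by distinct small colors are pairwise distinct, and $d^+_{D_G}(v)$ is exactly the number of small colors incident to $v$.

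Next I would bound the number $k$ of large colors incident to $v$. The minimality hypothesis on $E(G)$ (stated just before Definition~\ref{defnD}) guarantees that each $F_\alpha$ is a star forest, and summing the degrees of $v$ across all colors recovers the ordinary degree $d_G(v)$. Since each large color accounts for strictly more than $\sqrt{n}$ edges incident to $v$,
\[
k\sqrt{n} \;<\; \sum_{\alpha \in c(E(G))} d_{F_\alpha}(v) \;=\; d_G(v) \;\leq\; n-1,
\]
so $k<\sqrt{n}$. Consequently
\[
d^+_{D_G}(v) \;=\; d^c(v) - k \;\geq\; \frac{n}{2} - k \;>\; \frac{n}{2} - \sqrt{n},
\]
which yields the claimed bound after minimizing over $v$.

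The only subtlety is the pairwise distinctness of the out-neighbors produced by different colors, and this is essentially automatic once one recalls that each edge of $G$ carries a unique color. Beyond this observation and the star-forest structure of each $F_\alpha$, no further structural or extremal input is required.
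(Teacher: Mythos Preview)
Your proof is correct and follows the same approach as the paper, which tersely observes that at least $d^c(v)$ colors are incident to $v$ while fewer than $\sqrt{n}$ of them satisfy $d_{F_\alpha}(v)>\sqrt{n}$. You have simply made explicit the details the paper leaves implicit (injectivity of the map from small colors to out-neighbors, and the degree-count bounding the number of large colors); note incidentally that the star-forest structure you invoke is not actually needed here, since $\sum_\alpha d_{F_\alpha}(v)=d_G(v)$ holds regardless.
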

\begin{proof}
For every $v\in V(G)$, $d_{F_{\alpha}}(v)\geq 1$ for at least $d^c(v)$ colors $\alpha.$ In addition, $d_{F_{\alpha}}(v)>\sqrt{n}$ for less than $\sqrt{n}$ colors $\alpha.$ Therefore, $d_{D_G}^+(v)> n/2 -\sqrt{n}.$
\end{proof}

In addition to $D_G$, we shall consider the following graph associated with $(G,c)$. 
\begin{defn} \label{defG*}Let $G^*= (V(G), E)$ be the graph such that $\{u,v\}\in E$ if both $uv$ and $vu$ are in $E(D_G)$.
\end{defn}
The idea behind our approach is to use the regularity lemma applied to both $D_G$ and $G^*$ to either obtain rainbow paths between any two vertices or to argue that $D_G$ and $G^*$ are specific extremal configurations. There will be two extremal cases. In the first one, the vertex set of {$G$ can be partitioned into two sets $V_1, V_2$, each of size approximately $n/2$ so that the number of arcs in $D_G$ between $V_1, V_2$ is very small.} In the second extremal case, $G^*$ is approximately empty, and $D_G$ is approximately a {regular} tournament. The former extremal case is relatively easy to address, yet the latter requires a more involved argument.
\begin{defn}
    Let $\beta\in (0,1)$. We say that $(G,c)$ is $\beta$-extremal of type 1 if there is a partition $\{V_1,V_2\}$ of $V(G)$ such that for $i=1,2$, $|V_i|\geq (1/2-\beta)n$, and the number of arcs {in $D_G$} with one endpoint in $V_1$ and the other in $V_2$ is at most $\beta n^2.$ 
\end{defn}
\begin{defn}Let $\beta\in (0,1)$. We say that $(G,c)$ is $\beta$-extremal of type 2 if  $|E(G^*)|\leq \beta n^2.$
\end{defn}
We say that $(G,c)$ is not $\beta$-extremal if it is not extremal of type 1 or extremal of type 2.

\section{Regularity lemmas}\label{reg-section}
 We will discuss concepts related to the regularity lemma for graphs and digraphs. We will only consider {\it simple digraphs}, that is, digraphs such that for every pair of vertices $(u,v)$ there is at most one arc from $u$ to $v$. A digraph $D$ is called an {\it oriented graph}, if for every two vertices $u,v$ in $D$, at most one of the arcs $uv$, $vu$ is in $D$.

Let $G=(V,E)$ be a graph and let $V_1, V_2$ be two disjoint non-empty subsets of $V$. Then the {\it density} of the pair $(V_1,V_2)$ is defined as 
\[ d_G(V_1,V_2):=\frac{|E_G(V_1, V_2)|}{|V_1||V_2|}.\] 
Let $0\leq \epsilon, d\leq 1$. 
We say that $G[V_1,V_2]$ is \textit{$\epsilon$-regular of density $d$} if for every $V_1'\subseteq V_1$ and every $V_2'\subseteq V_2$ with $|V_i'|\geq \epsilon |V_i|$ for $i=1,2$,
\[|d_{G}(V_1', V_2')|-d|\leq \epsilon.\]

We similarly define $\epsilon$-regularity in digraphs. 
Let $D=(V,F)$ be a digraph and let $V_1, V_2$ be two disjoint non-empty subsets of $V$. Then the {\it density} of the ordered pair $(V_1,V_2)$ is defined by 
\[d_D(V_1,V_2):=\frac{|E_D(V_1, V_2)|}{|V_1||V_2|}.\]
We say that $D[V_1,V_2]$ 
is \textit{$\epsilon$-regular of density $d$} if for every $V_1'\subseteq V_1$ and $V_2'\subseteq V_2$ with $|V_i'|\geq \epsilon |V_i|$ for $i=1,2$, 
\[|d_{D}(V_1', V_2')|-d|\leq \epsilon.\]

We say that an oriented graph $D=(V,F)$ and a graph $G=(V,E)$ are \textit{edge-disjoint} if the underlying undirected graph of $D$ and $G$ are edge-disjoint.
The main tool used to address the non-extremal case will be the regularity lemma of Szemer\'{e}di \cite{szem}. We will consider an oriented graph $D$ and a graph $G$ on the same set of vertices such that $D$ and $G$ are edge-disjoint, and regularize both $D$ and $G$. {The following lemma is a modification of the Diregularity Lemma of Alon-Shapira from \cite{alon-shapira}. It is essentially the degree form of the Diregularity Lemma (Lemma 3.1 in \cite{KKO}) with an additional refinement of a di-regular partition of $D$ that yields the regularity of $G$, along the same lines as in Theorem 3.1 from \cite{CMN}. }
\begin{lemma}[Degree form of the regularity lemma]
\label{deg-form}
    Let $\epsilon \in (0,1)$ and let $m\in \Z^+$. Then there exist integers $M$ and $n_0$ such that the following holds. If $D=(V,F)$ is an oriented graph and $G=(V,E)$ is a graph on $|V|\geq n_0$ vertices such that $D$ and $G$ are edge-disjoint, and $d\in (0,1)$, then there is a partition $V_0,V_1, \dots, V_t$ of $V$, a spanning subdigraph $D'$ of $D$, and a spanning subgraph $G'$ of $G$ such that
    \begin{enumerate}[label=(\roman*)]
    \setlength\itemsep{0em}
    \item $m\leq t\leq M$,
    \item $|V_0|\leq \epsilon |V|$,
    \item $|V_1|=\cdots =|V_t|$,
    \item $d_{G'}(x)\geq d_G(x) -{(d+\epsilon)|V|}$ and  $d^+_{D'}(x)\geq d_D^+(x)-{(d+\epsilon)}|V|$ for every $x\in V$,
    \item all but at most $\epsilon t^2$ pairs $(V_i, V_j)$ are $\epsilon$-regular in  both $D$ and in $G$,
    \item $G'[V_i]=D'[V_i]=\emptyset$ for every $i\in [t]$,
    \item for every $1\leq i\neq j\leq t$, $G'[V_i, V_j]$ is $\epsilon$-regular of density zero or at least $d$,
    \item for every $1\leq i\neq j\leq t$, $D'[V_i, V_j]$ is $\epsilon$-regular of density zero or at least $d.$
    \end{enumerate}
\end{lemma}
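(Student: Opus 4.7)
The plan is to combine two ingredients already in the literature: the degree form of the Diregularity Lemma (Lemma 3.1 of \cite{KKO}) applied to the oriented graph $D$, and the joint-regularization trick of Theorem 3.1 of \cite{CMN}, which refines a given regular partition of one graph by the regular partition of another. The goal is to first produce a single partition of $V$ that is $\epsilon$-regular simultaneously in $D$ and in $G$, and then to perform the standard edge-deletion cleanup to obtain $D'$ and $G'$ satisfying the degree bound in (iv).

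First I would apply the Diregularity Lemma to $D$ with an auxiliary parameter $\epsilon'\ll \epsilon$ and a lower-bound parameter $m'\geq m$, obtaining an equitable partition $V_0^D,U_1,\ldots,U_s$ that is $\epsilon'$-regular in $D$ outside at most $\epsilon' s^2$ pairs. I would then refine the clusters $U_i$ by iteratively applying Szemer\'edi's regularity lemma to $G$ restricted to the existing pairs, as in \cite{CMN}. Since sub-pairs of an $\epsilon'$-regular pair stay $O(\sqrt{\epsilon'})$-regular whenever the subparts are not too small, choosing $\epsilon'$ sufficiently small compared to $\epsilon$ and to the final number of parts guarantees that the common refinement $V_0,V_1,\ldots,V_t$ is $\epsilon$-regular both in $D$ and in $G$ outside at most $\epsilon t^2$ pairs. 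This establishes (i)--(iii).

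Next, I would define $D'\subseteq D$ and $G'\subseteq G$ by deleting, in each of the two graphs, (a) all arcs/edges with an endpoint in $V_0$, (b) all arcs/edges inside a single cluster $V_i$, (c) all arcs/edges lying in a pair $(V_i,V_j)$ that is $\epsilon$-irregular in the respective graph, and (d) all arcs/edges in a pair with density less than $d$ in the respective graph. Conditions (v)--(viii) are immediate from the construction. For (iv), fix $x\in V\setminus V_0$ with $x\in V_i$: the number of out-arcs of $x$ in $D$ that get removed is bounded by
\[
|V_0|+|V_i|+\sum_{j\in J}|V_j|+d\cdot\sum_{j\notin J}|V_j|,
\]
where $J$ collects the indices $j$ for which $(V_i,V_j)$ is either $D$-irregular or of density below $d$. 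Choosing $m\geq 1/\epsilon$ bounds the first two terms by $2\epsilon|V|$; absorbing into $V_0$ the at most $\sqrt{\epsilon}\,t$ clusters $V_i$ that have more than $\sqrt{\epsilon}\,t$ bad partners bounds the third by $\sqrt{\epsilon}|V|$; and the fourth is at most $d|V|$. Re-scaling the initial regularity parameter to absorb the constant blow-up from $2\epsilon+\sqrt{\epsilon}$ yields (iv) for $D'$, and the same computation for edges of $G$ delivers it for $G'$.

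The main technical obstacle is the joint regularization in the first step: producing a single partition that is $\epsilon$-regular for $D$ and for $G$ at the same time, rather than regularizing the two graphs independently. I do not expect a genuinely new idea to be required here, because the iterative refinement scheme of \cite{CMN} transfers essentially verbatim, but the bookkeeping is delicate: one must interleave the $D$-regularization with the $G$-refinement while controlling both the regularity parameter and the final number of parts through an iterated-exponential tower, and one must verify at the end that the deletions in (c)--(d) can be carried out independently for $D$ and $G$ on the same common partition.
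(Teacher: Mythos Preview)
Your proposal is correct and follows exactly the approach the paper indicates: the paper does not give a detailed proof but simply states that the lemma is the degree form of the Diregularity Lemma (Lemma~3.1 of \cite{KKO}) together with the joint-refinement technique of Theorem~3.1 of \cite{CMN}, which is precisely the two-ingredient strategy you outline. Your write-up in fact supplies more detail than the paper does, including the standard edge-deletion cleanup and degree-loss bookkeeping for (iv).
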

{Given $\epsilon, d\in [0,1]$ and $m\in \Z^+$  and a digraph $D_G$, we consider two reduced graphs that are obtained from Lemma \ref{deg-form}}. The first reduced graph arises from $G:=G^*$ (Definition \ref{defG*}), and the second is a reduced digraph associated with the digraph $D$ obtained from $D_G$ by removing arcs $uv$ such that $\{u,v\}\in E(G^*)$. Specifically,  the {\it reduced digraph of $D$}, $R'(D):=R'(D,m, \epsilon, d)$, is the digraph with $V(R'(D))=[t]$ obtained by adding $ij$ to $E(R'(D))$ when $d_{D'}(V_i,V_j)>0$, and {\it the reduced graph of $G$}, $R'(G):=R'(G,m, \epsilon, d)$, is the graph with $V(R'(G))=[t]$ obtained by adding {$\{i,j\}$} to $E(R'(G))$ if $d_{G'}(V_i,V_j)>0$. Note that $R'(D)$ can contain both $ij$ and $ji$. 

Let $N_{R'(G)}(i)$ denote the set of neighbors of $i$ in $R'(G)$, and $deg_{R'(G)}(i):=|N_{R'(G)}(i)|$. 
Let $s:= |V_1|=\cdots =|V_t|$.
The next lemma is a small modification of Lemma 3.2 from {\cite{KKO}} that was suggested by T. Molla \cite {TM}, and allows us to find a spanning oriented subgraph of $R'(D)$ that has useful properties.

\begin{lemma}\label{cluster-graph-lem}For every $\epsilon\in (0,1)$ there exist $m$ and $n_0$ such that the following holds. Let $d\in (0,1)$, $D=(V,F)$ be an oriented graph, and let $G=(V,E)$ be a graph such that  $|V|\geq n_0$ and $D$ and $G$ are edge-disjoint. Then $R'(D,m, \epsilon, d)$ defined above contains a spanning oriented subgraph $R := R(D,m, \epsilon, d; G)$ such that $R$ and $R'(G)$ are edge-disjoint, and for every $x\in V(R)$,  
$$deg^+_{R}(x)+deg_{R'(G)}(x)\geq (\delta^+(D)+\delta(G))|V(R)|/|V(G)|-(2d+4\epsilon)|V(R)|.$$
\end{lemma}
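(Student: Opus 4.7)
The plan is to apply Lemma \ref{deg-form} to the edge-disjoint pair $(D,G)$ and then construct an oriented subgraph $R\subseteq R'(D)$ that is edge-disjoint from $R'(G)$ via a density-weighted orientation of the ``double'' arcs; the degree inequality follows from a degree-sum estimate that exploits both the edge-disjointness of $D$ and $G$ and the oriented-graph density constraint.

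First, I would invoke Lemma \ref{deg-form} with parameters $\epsilon,d,m$ to produce a partition $V_0,V_1,\ldots,V_t$, spanning subgraphs $D'\subseteq D$ and $G'\subseteq G$ satisfying properties (i)--(viii), and the reduced graph $R'(G)$ and reduced digraph $R'(D)$. To define $R$, I would process each cluster-pair $\{i,j\}$ as follows: if $\{i,j\}\in E(R'(G))$, discard both $ij$ and $ji$; if exactly one of $ij,ji$ lies in $E(R'(D))$, include that arc in $R$; if both $ij,ji\in E(R'(D))$, include the direction with the larger density $d_{D'}$, breaking ties arbitrarily. Then $R$ is an oriented subgraph of $R'(D)$ that is edge-disjoint from $R'(G)$.

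For the degree inequality, fix $i\in[t]$ and any $x\in V_i$. Property (iv) of Lemma \ref{deg-form} gives $d^+_{D'}(x)+d_{G'}(x)\geq \delta^+(D)+\delta(G)-2(d+\epsilon)|V(G)|$. Since $D$ and $G$ are edge-disjoint and $D$ is oriented, for every pair of clusters one has the three-way cap $d_{D'}(V_i,V_j)+d_{D'}(V_j,V_i)+d_{G'}(V_i,V_j)\leq 1$. Summing the above lower bound over $x\in V_i$, absorbing the $V_0$-contribution into an additive $2\epsilon|V(G)|\cdot|V_1|$ error, and applying the three-way cap, I would decompose the sum $\sum_{j\neq 0,i}(|E_{D'}(V_i,V_j)|+|E_{G'}(V_i,V_j)|)$ according to which of $ij,ji$ lie in $E(R'(D))$ and whether $\{i,j\}\in E(R'(G))$. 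The upshot is a lower bound on $T_i:=\{j\in[t]:ij\in E(R'(D))\text{ or }\{i,j\}\in E(R'(G))\}$ with a density-based correction term that accounts for double pairs.

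By construction $\deg^+_R(i)+\deg_{R'(G)}(i)=|T_i|-\ell_i$, where $\ell_i$ counts the double pairs $\{i,j\}$ with $\{i,j\}\notin E(R'(G))$ whose kept direction is $ji$ (incoming at $i$). The main obstacle is to show that the density-based correction in the lower bound for $|T_i|$ absorbs $\ell_i$. Here the density-weighted orientation rule is crucial: for each flipped double pair the discarded direction satisfies $d_{D'}(V_i,V_j)\leq 1/2$, while each kept double direction and each pair with $\{i,j\}\in E(R'(G))$ contributes at most $1-d$ to the relevant density sum via the three-way cap. Carefully inserting these tighter caps in the upper bound on $\sum_{j\neq 0,i}(|E_{D'}(V_i,V_j)|+|E_{G'}(V_i,V_j)|)$ and comparing with the degree-sum lower bound yields $|T_i|-\ell_i\geq (\delta^+(D)+\delta(G))|V(R)|/|V(G)|-(2d+4\epsilon)|V(R)|$, which is precisely the claim.
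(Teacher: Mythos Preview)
Your deterministic ``keep the larger density'' rule does not close. The core issue is quantitative: with your rule, each lost double pair $\{i,j\}$ (i.e., $ji$ is kept) satisfies $d_{D'}(V_i,V_j)\le 1/2$, so in the upper bound for $\sum_{j}(d_{D'}(V_i,V_j)+d_{G'}(V_i,V_j))$ each such pair is capped at $1/2$ rather than $1$. This recovers only $\ell_i/2$ of slack, not $\ell_i$. Concretely, writing $a=\deg_{R'(G)}(i)$, $b$ for single out-pairs, $c$ for kept double pairs and $\ell=\ell_i$ for lost double pairs, your caps give
\[
\sum_{j}(d_{D'}(V_i,V_j)+d_{G'}(V_i,V_j))\le a+b+(1-d)c+\tfrac{1}{2}\ell,
\]
so comparing with the degree-sum lower bound yields only $a+b+c\ge \text{target}-\ell/2+cd$, and there is no reason for $cd\ge \ell/2$. (Your claim that pairs with $\{i,j\}\in E(R'(G))$ contribute at most $1-d$ is also incorrect: the three-way cap $d_{D'}(V_i,V_j)+d_{D'}(V_j,V_i)+d_{G'}(V_i,V_j)\le 1$ bounds $d_{D'}(V_i,V_j)+d_{D'}(V_j,V_i)$, not $d_{D'}(V_i,V_j)+d_{G'}(V_i,V_j)$.) A clean obstruction: if every $j\neq i$ has $d_{G'}(V_i,V_j)=0$, $d_{D'}(V_i,V_j)=0.4$, $d_{D'}(V_j,V_i)=0.6$, then the density sum is $\approx 0.4t$ while your rule gives $\deg^+_R(i)+\deg_{R'(G)}(i)=0$.

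The paper avoids this by orienting each double pair \emph{randomly}, keeping $ij$ with probability $d_{D'}(V_i,V_j)/(d_{D'}(V_i,V_j)+d_{D'}(V_j,V_i))$. Since $D$ is oriented, the denominator is at most $1$, so the expected contribution of each such pair to $\deg^+_R(i)$ is at least $d_{D'}(V_i,V_j)$; summing and using $d_{D'}+d_{G'}\le 1$ on $N_{R'(G)}(i)$ gives $\mathbb{E}[\deg^+_R(i)+\deg_{R'(G)}(i)]\ge \text{target}+\epsilon|V(R)|$. A Chernoff-plus-union-bound over the $t$ clusters (taking $m$ large) then yields a single orientation good for all $i$ simultaneously. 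The randomization is what buys you the full $d_{D'}(V_i,V_j)$ in expectation for every vertex at once; your greedy rule can be globally consistent yet locally bad for some $i$.
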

\begin{proof}
The oriented graph $R$  is obtained by removing some arcs from $R'(D)$ as follows. Suppose $i, j\in V(R')$ are such that  $ij\in E(R'(D))$ and $\{i,j\}\notin E(R'(G))$. { Then add exactly one of $ij$, $ji$ to $R$. Specifically, add $ij$ with probability
$d_{D'}(V_i, V_j)/({d_{D'}(V_i, V_j)+d_{D'}(V_j, V_i)}),$ 
choices made independently for every $i$ and $j$.} Note that in the case $ij\in E(R'(D))$ but $ji\notin E(R'(D))$, the probability of adding $ij$ is one.  Then $R$ is an oriented subgraph of $R'$, and $R$ and $R'(G)$ are edge-disjoint. Let $X_i=deg_{R'(G)}(i)+ deg^+_{R}(i).$ Let $D'$ be the spanning sub-digraph of $D$, $G'$ the spanning subgraph of $G$ from Lemma \ref{deg-form}. Then $D'$ is an oriented graph and $D'$ and $G'$ are edge-disjoint. We have
\begin{align*}
	\E(X_i) & = deg_{R'(G)}(i)+ \sum_{j\in N^+_{R'(D)}(i)\setminus N_{R'(G)}(i)}\frac{d_{D'}(V_i, V_j)}{d_{D'}(V_i, V_j)+d_{D'}(V_j, V_i)} \\
		& \geq deg_{R'(G)}(i)+ \sum_{j\in N^+_{R'(D)}(i)\setminus N_{R'(G)}(i)}d_{D'}(V_i,V_j) \\
		& =\frac{1}{s^2}\left(\sum_{k\in N_{R'(G)}(i)}|V_i||V_k|+ \sum_{j\in N^+_{R'(D)}(i)\setminus N_{R'(G)}(i)}|E(D'[V_i,V_j])|\right). 
\end{align*}
For every $x\in V_i$ {and every $k\in N_{R'(G)}(i)$}, $|N^+_{D'}(x)\cap V_k|+|N_{G'}(x)\cap V_k|\leq |V_k|$ as $D'$ and $G'$ are edge-disjoint. Thus
\begin{align*}
	\E(X_i) & \geq \frac{1}{s^2}\sum_{x\in V_i}(d^+_{D'}(x)+d_{G'}(x)-|V_0|) \\
		& \geq \frac{1}{s^2}\sum_{x\in V_i}(d^+_{D}(x)-(d+\epsilon)|V(G)|+d_{G}(x)-(d+\epsilon)|V(G)|-\epsilon|V(G)|) \\
		& \geq \frac{1}{s}(\delta^+(D)+\delta(G) - (2d+3\epsilon)|V(G)|) \\
		& \geq (\delta^+(D)+\delta(G))|V(R)|/|V(G)|-(2d+3\epsilon)|V(R)|.
\end{align*}
Note that $X_i$ is a sum of independent Poisson trials (with some of the probabilities equal to one) and 
 we may assume $(\delta^+(D)+\delta(G))|V(R)|/|V(G)|-(2d+3\epsilon)|V(R)|\geq \epsilon |V(R)|,$ as otherwise the result holds trivially. 
 We have
\[	Pr(X_i<(\delta^+(D)+\delta(G))|V(R)|/|V(G)|-(2d+4\epsilon)|V(R)|)\leq Pr(X_i<\E(X_i)-\epsilon |V(R)|),\] 
and $\E(X_i)-\epsilon |V(R)|=(1-\xi)\E(X_i)$ for $\xi= \epsilon |V(R)|/\E(X_i)$. 
Then $\epsilon\leq \xi\leq 1$ since $\E(X_i)\ge \epsilon |V(R)|$ and $\E(X_i)\le |V(R)|$.
By Chernoff bound, 
 \[ Pr(X_i<\E(X_i)-\epsilon |V(R)|)<\exp(-\xi^2\E(X_i)/2)\leq \exp(-\epsilon^2 |V(R)|/2).\] 
 Therefore, from the union bound, 
 \begin{align*}
 	& Pr(X_i\ge (\delta^+(D)+\delta(G))|V(R)|/|V(G)|-(2d+4\epsilon)|V(R)|,\ \forall i\in V(R)) \\
		\ge & 1-\sum_{i\in V(R)} \exp(-\epsilon^2 |V(R)|/2) = 1- |V(R)| \exp(-\epsilon^2 |V(R)|/2)
 \end{align*}
  and the right-hand side is positive for large enough $|V(R)|=m$.
 Consequently, there is a spanning oriented subgraph $R$ of $R'$ such that for every $i\in V(R)$, $$X_i\geq (\delta^+(D)+\delta(G))|V(R)|/|V(G)|-(2d+4\epsilon)|V(R)|.$$
\end{proof}

\section{The Non-extremal Case}\label{non-section}

We will now show how to use the regularity lemma from the previous section to prove rainbow connectivity in the non-extremal case.

We will recall the notation first. Let $(G,c)$ be an edge-colored graph on $n$ vertices such that $\delta^c(G)\geq \frac{n}{2}$  and let $D=(V,E)$ be the digraph $D_G$ from Definition \ref{defnD}, and let $G^*=(V, E^*)$ be the graph on $V$ such that $\{u,v\}\in E^*$ if both $uv$ and $vu$ are in $E$ (Definition \ref{defG*}). Let $D^*=(V, F^*)$ be such that $uv\in F^*$ when $uv\in E$ but $vu\notin E$. Then $D^*$ is an oriented graph, and $D^*$ and $G^*$ are edge-disjoint. In addition, by Fact \ref{fact-outdeg}, \begin{equation}\label{delta-bd}\delta^+(D^*)+\delta(G^*)\geq n/2-\sqrt{n}.\end{equation} 

Let $0< \epsilon \ll d \ll 1$ be constants to be determined later and let $n$ be sufficiently large. 
{By Lemma \ref{cluster-graph-lem} with $D^*$ and $G^*$ in the place of $D$ and $G$, respectively,
there is a partition $V_0, \dots, V_t$ of $V(G^*)=V(D^*)$, a reduced oriented graph $R(D^*)$ in the place of $R$, and a reduced graph $R(G^*)$ in the place of $R'(G)$,} such that  $R(D^*)$ and $R(G^*)$ are edge-disjoint, and for every $j\in [t]$, by (\ref{delta-bd}), 
\begin{equation}\label{eq-outdeg}
deg^+_{R(D^*)}(j)+deg_{R(G^*)}(j)\geq (\delta^+(D^*)+\delta(G^*))t/n- (2d+4\epsilon)t\geq {(1/2-2d-5\epsilon)}t.
\end{equation}
Let $s:=|V_1|=\cdots =|V_t|$ and recall that $|V_0|<\epsilon n.$
\begin{lemma}\label{path-cluster-lem}
If there is $j\in [t]$ such that $deg^-_{R(D^*)}(j)+deg_{R(G^*)}(j)\geq (1/2+3d)t$, then for every $x, y\in V(G)$ with $x\neq y$ there is a rainbow $x,y$-path in $G$ of length six.
\end{lemma}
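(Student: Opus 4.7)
The plan is to construct the rainbow $x,y$-path as two halves of length three meeting at a common vertex $v_3\in V_j$, using the hypothesis that $V_j$ is reachable from any cluster in two reduced-graph steps.

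\textbf{Reduced-graph skeleton.} Let $H$ be the digraph on $[t]$ with arc-set $E(R(D^*))\cup\{ik,ki:\{i,k\}\in E(R(G^*))\}$. Then by (\ref{eq-outdeg}) and the hypothesis,
\[
\delta^+(H)\;\geq\;(1/2-2d-5\epsilon)t,\qquad \deg^-_H(j)\;\geq\;(1/2+3d)t,
\]
so for every $i\in[t]$, $|N^+_H(i)\cap N^-_H(j)|\geq (d-5\epsilon)t>0$. Since $d^+_{D_G}(x)>n/2-\sqrt n$, an averaging argument shows that the set $X:=\{i:|N^+_{D_G}(x)\cap V_i|\geq\epsilon s\}$ has size $\geq(1/2-3\epsilon)t$, and similarly for its analog $Y$ built from $y$. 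Pick any $V_{i_1}\in X$ and then $V_{i_2}$ with $i_2\in N^+_H(i_1)\cap N^-_H(j)$; symmetrically pick $V_{i_5}\in Y$ and $V_{i_4}$ with $i_4\in N^+_H(i_5)\cap N^-_H(j)$.

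\textbf{Greedy construction using out-arcs.} Build the path $x,v_1,v_2,v_3,v_4,v_5,y$ with $v_k$ in the chosen cluster, realizing each edge as a $D_G$-arc oriented so that its color is an out-arc color of some already-fixed vertex; concretely, $\{x,v_1\},\{v_1,v_2\},\{v_2,v_3\}$ are out-arcs of $x,v_1,v_2$ and $\{y,v_5\},\{v_5,v_4\},\{v_4,v_3\}$ are out-arcs of $y,v_5,v_4$. Because the out-arcs of any single vertex in $D_G$ carry pairwise distinct colors (Definition~\ref{defnD}), avoidance of the at most five previously used colors forbids at most five candidate successors; combined with typicality (at most $O(\epsilon)s$ atypical vertices in any cluster) and regularity ($\geq(d-\epsilon)s$ out-arcs into the next cluster from a typical vertex), a suitable successor is always available at each intermediate step.

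\textbf{Meeting at $v_3$.} Having fixed typical $v_2$ and $v_4$, the standard slicing lemma for $\epsilon$-regular pairs gives $|N^+_{D_G}(v_2)\cap N^+_{D_G}(v_4)\cap V_j|\geq(d^2-O(\epsilon))s$. Within this intersection the color maps $v_3\mapsto c(v_2v_3)$ and $v_3\mapsto c(v_4v_3)$ are individually injective (as out-arc colors at $v_2$ and $v_4$), so the four previously used colors exclude at most eight candidate $v_3$'s.

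\textbf{Main obstacle.} The subtle issue is the coincidence $c(v_2v_3)=c(v_4v_3)$, which forces $v_3$ to be the center of a star in some $F_\alpha$ containing both $v_2$ and $v_4$ as leaves. I would resolve this via a dichotomy on how the hypothesis is realized. If $\deg_{R(G^*)}(j)$ is comparable to $t/2$, one selects $V_{i_2},V_{i_4}\in N_{R(G^*)}(j)$ and routes the meeting edges through $G^*$; then the two edges at $v_3$ are both out-arcs of $v_3$ in $D_G$ and therefore automatically carry distinct colors, so coincidences cannot occur. Otherwise $\deg^-_{R(D^*)}(j)$ is large, and the coincidence count must be bounded by a cherry-counting argument exploiting that each color class is a star forest together with the estimate $\sum_\alpha\binom{d_{F_\alpha}(v)}{2}\leq n(d_G(v)-d^c(v))/2$ implied by $\delta^c(G)\geq n/2$, combined with a careful choice of typical $v_2,v_4$ that keeps the coincidence count below $(d^2-O(\epsilon))s$. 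This coincidence analysis is the main technical obstacle I anticipate; once resolved, a valid $v_3$ exists and the six-edge rainbow $x,y$-path is completed.
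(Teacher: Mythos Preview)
Your overall scheme is close in spirit to the paper's, but there is a genuine gap at exactly the point you flag as the ``main obstacle,'' and the paper resolves it by a structural observation you have missed rather than by any cherry-counting.

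The key fact you do not use is that the hypothesis \emph{forces} $\deg_{R(G^*)}(j)\geq 1$. Indeed, $R(D^*)$ is an oriented graph, so $\deg^+_{R(D^*)}(j)+\deg^-_{R(D^*)}(j)\leq t-1$; adding the hypothesis $\deg^-_{R(D^*)}(j)+\deg_{R(G^*)}(j)\geq(1/2+3d)t$ to (\ref{eq-outdeg}) gives $2\deg_{R(G^*)}(j)\geq (d-5\epsilon)t-1>0$. So there is always some $i$ with $\{i,j\}\in E(R(G^*))$, and no dichotomy is needed. The paper then exploits this single $G^*$-neighbor $i$ by routing the path as
\[
x\;\to\;V_{i_x}\;\to\;V_j\;\to\;V_i\;\to\;V_j\;\to\;V_{i_y}\;\to\;y,
\]
where $i_x,i_y\in N^-_H(j)\setminus\{i\}$ are clusters into which $x,y$ respectively have many out-arcs (these exist by the same degree count you used for your sets $X,Y$, applied against the large set $N^-_H(j)$). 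The two middle edges are $G^*$-edges incident to the meeting vertex $z\in V_i$, and since $G^*$ is properly colored they automatically carry distinct colors; this kills the coincidence problem outright.

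Your proposed structure instead meets at $v_3\in V_j$ via two incoming $D_G$-arcs, and your fallback cherry-counting does not obviously work: for fixed $v_2,v_4$, the number of $v_3$ with $c(v_2v_3)=c(v_4v_3)$ is governed by the star sizes at the out-neighbors of $v_4$, which can be $\Theta(n)$ and are not controlled by $\sum_\alpha\binom{d_{F_\alpha}(v)}{2}$ (that sum counts cherries \emph{centered} at $v$, whereas here $v_2,v_4$ are leaves). The clean fix is to drop the symmetric meeting-at-$V_j$ picture and adopt the bounce through a $G^*$-neighbor of $j$, which is always available.
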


\begin{proof}
Suppose $deg^-_{R(D^*)}(j)+deg_{R(G^*)}(j)\geq (1/2+3d)t$. Then, by (\ref{eq-outdeg}), $deg_{R(G^*)}(j)\geq 1$. Let $i\in [t]\setminus \{j\}$ be such that $\{i,j\}\in E(R(G^*)).$
For $z\in \{x,y\}$, let $i_z\in N^-_{R(D^*)}(j)\cup N_{R(G^*)}(j)\setminus\{i\}$ be such that $|N^+_{D}(z)\cap V_{i_z}|\geq 2\epsilon s$. 
Note that such $i_z$ exists, as otherwise, 
\begin{align*}
	d^+_D(z) & = d^+_{D^*}(z)+d_{G^*}(z) \\
	 & < (1/2-3d)t\cdot s + ( (1/2+3d)t -1)\cdot 2\epsilon s + (d+\epsilon)|V|,	
\end{align*}
where the first two terms are the edges from $z$ in $D^*$ and $G^*$ after deleting $V_i$, and the third term is the possible degrees deleted in the regularity lemma.
Thus 
\begin{align*}
	d^+_D(z)  & < (1/2-3d)n + (1/2 + 3d)2\epsilon n + (d+\epsilon) n  \\
		 & < (1/2 -2d + 2\epsilon +6d\epsilon)n,
\end{align*}
which is a contradiction for $\epsilon< d/2< 1/6$.

Now we find a rainbow $x,y$-path $P$ in $G$ of length six greedily. 
Let $C_P$ denote the colors that occur on $P$, and initially set $C_P := \emptyset$. 
First, since $(V_{i_x}, V_j)$ and $(V_j, V_i)$ are $\epsilon$-regular pairs, we can take $u_x\in V_{i_x}$, $w_x\in V_j$ such that $xu_xw_x$ is a rainbow path and $|N_{G^*}(w_x)\cap V_i|\geq (d-\epsilon)|V_i|$. Add $c(xu_x)$ and $c(u_xw_x)$ to $C_P$.
Then we take $u_y\in V_{i_y}\setminus \{x, u_x, w_x\}$ such that $c(yu_y)\notin C_P$ and $|N^+_{D^*}(u_y)\cap V_j|\geq (d-\epsilon)|V_j|$, and add $c(yu_y)$ to $C_P$.

Note that, by construction, the edges in $E(G^*)$ are properly colored in $(G,c)$, and the edges in $E(D^*)$ from a fixed vertex have distinct colors. Therefore, all edges between $w_x$ and  $N_{G^*}(w_x)\cap V_i$ have distinct colors, and all edges between $u_y$ and $N^+_{D^*}(u_y)\cap V_j$ have distinct colors. Let 
\begin{align*}
& W_x := \{v\in N_{G^*}(w_x)\cap V_i : c(vw_x) \notin C_P\}, \quad \text{and}  \\
& W_y := \{v\in N^+_{D^*}(u_y)\cap V_j : c(vu_y) \notin C_P\}, 
\end{align*}
then 
\begin{align*}
|W_x|\ge |N_{G^*}(w_x)\cap V_i|-3\ge \epsilon \cdot s \quad \text{ and } \quad |W_y|\ge |N^+_{D^*}(u_y)\cap V_j|-3 \ge \epsilon \cdot s.
\end{align*}

Thus, there are at least $(d-\epsilon)|W_x||W_y|$ properly colored edges in $G^*$ between $W_x$ and $W_y$. 
We take $z\in W_x\setminus \{x,u_x,y,u_y\}$ and $w_y\in W_y\setminus \{x,u_x, w_x,y\}$ such that $zw_y$ is an edge and $c(zw_y) \notin C_P$, and $w_xzw_yu_y$ is a rainbow path. Hence, $xu_xw_xzw_yu_yy$ is a rainbow $x,y$-path of length six.
\end{proof}

In view of Lemma \ref{path-cluster-lem}, we may assume that for every $j\in [t]$, 
\begin{equation}\label{max-indeg-eq}
    deg^-_{R(D^*)}(j)+deg_{R(G^*)}(j)<(1/2+3d)t.
\end{equation}
Let $A=\{j\in [t] : deg^-_{R(D^*)}(j)+deg_{R(G^*)}(j)<(1/2-\sqrt{d})t\}$. By (\ref{eq-outdeg}) and (\ref{max-indeg-eq}), we have
${(1/2-2d-5\epsilon)}t^2< (1/2-\sqrt{d})|A|t+ (t-|A|)(1/2+3d)t$, and so \begin{equation}\label{eq-non-ext-A}{|A|< \frac{5d+5\epsilon}{\sqrt{d}+3d}t<5\sqrt{d}t,}\end{equation}
as long as $\epsilon \leq d^{3/2}.$

\begin{lemma}\label{non-ext-lem}
    Let $\beta>0$ be such that $1/n\ll \beta \ll 1$. If $(G,c)$ is an edge-colored graph on $n$ vertices that is not $\beta$-extremal of type 1 and not $\beta$-extremal of type 2, then for every $x,y\in V(G)$ with $x\neq y$, there is a rainbow $x,y$-path of length at most eight.
\end{lemma}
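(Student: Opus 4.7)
Building on the paragraph immediately preceding the statement, I may assume that condition (\ref{max-indeg-eq}) holds for every $j\in[t]$---otherwise Lemma~\ref{path-cluster-lem} already produces a rainbow $x,y$-path of length six---so that $|A|<5\sqrt{d}\,t$ by (\ref{eq-non-ext-A}). The plan, for arbitrary $x\neq y$, is to find a walk of length at most six in the \emph{mixed} reduced structure---where each step is an out-arc of $R(D^*)$ or either direction of an edge of $R(G^*)$---between two carefully chosen clusters, and then lift it to a rainbow $x,y$-path of length at most eight in $G$ by adjoining one edge at $x$ and one at $y$. Call $i\in[t]$ \emph{accessible} from $z\in V(G)$ if $|N^+_{D_G}(z)\cap V_i|\geq 2\epsilon s$; Fact~\ref{fact-outdeg} together with a simple averaging shows that at least $(1/2-4\epsilon)t$ clusters are accessible from each of $x,y$, and since $|A|<5\sqrt{d}\,t\ll t/2$, I can select $i_x,i_y\in[t]\setminus A$ accessible from $x,y$, respectively.

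For the reduced-walk step, the two non-extremality hypotheses first translate to the reduced structure by absorbing the $O(d)n$ error of Lemma~\ref{deg-form}: for any partition $(I_1,I_2)$ of $[t]$ with $|I_k|\geq (1/2-\beta/2)t$, the number of arcs of $R(D^*)$ plus edges of $R(G^*)$ between $I_1$ and $I_2$ is at least $\beta' t^2$ with $\beta'=\beta-O(d)$; and $|E(R(G^*))|\geq \beta' t^2$. Let $U_k\subseteq[t]$ be the set of clusters reachable from $i_x$ by a walk of length at most $k$ in the mixed structure with intermediate clusters in $[t]\setminus A$, and define $W_k$ dually as the set of clusters from which $i_y$ is reachable. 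By (\ref{eq-outdeg}), $|U_1|\geq (1/2-O(d))t$, and since $i_y\notin A$, $|W_1|\geq (1/2-O(\sqrt d))t$. I will then argue $|U_3|+|W_3|>t$, giving $U_3\cap W_3\neq\emptyset$ and a walk of length at most six. The growth mechanism is that whenever $U_k$ fails to expand, non-extremality type~1 produces $\ge\beta' t^2$ crossings between $U_k$ and its complement in $R(D^*)\cup R(G^*)$; any such crossing that is either an $R(G^*)$-edge (bidirectional) or a forward $R(D^*)$-arc out of $U_k\setminus A$ strictly enlarges $U_{k+1}$, while if all crossings are backward $R(D^*)$-arcs into $U_k\setminus A$, then the symmetric obstruction $W_{k+1}=W_k$ applied to the partition $(U_k\cup([t]\setminus(U_k\cup W_k)),W_k)$ forces essentially all crossings to pass through $A$, contradicting $|A|=O(\sqrt d)\,t$; the symmetric $R(G^*)$-edges supplied by non-extremality type~2 are the key in ruling out this degenerate alternative because they always contribute to reachability in both directions.

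Given a reduced walk $i_x=j_0,j_1,\ldots,j_\ell=i_y$ with $\ell\le 6$, I lift it greedily to a rainbow $x,y$-path of length $\ell+2\le 8$ in $G$ as in the proof of Lemma~\ref{path-cluster-lem}: the initial edge $xu_x$ is chosen with $u_x\in V_{i_x}\cap N^+_{D_G}(x)$ carrying a fresh color, and at each subsequent step the $\epsilon$-regularity of the pair $(V_{j_i},V_{j_{i+1}})$ together with the color-distinctness of $D^*$-out-arcs at a fixed vertex (or proper coloring of $G^*$-edges) lets me pick the next vertex avoiding the $O(1)$ previously used vertices and colors; the endpoint edge $u_yy$ is handled symmetrically. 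The main obstacle is the middle step: since $R(D^*)$ is oriented while $R(G^*)$ is symmetric, a ``mixed sparse cut'' is subtler than an ordinary undirected sparse cut, and the crux is balancing the two non-extremality hypotheses---type~1 preventing a sparse directional cut in $R(D^*)\cup R(G^*)$ and type~2 supplying symmetric $R(G^*)$-edges whose mere presence across any cut extends forward reachability---while keeping the exceptional set $A$ under control throughout.
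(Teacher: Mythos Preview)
Your overall strategy—find accessible clusters $i_x,i_y\notin A$, locate a short mixed walk between them in $R(D^*)\cup R(G^*)$, then lift it greedily—is plausible and genuinely different from the paper's argument, but the central step has a real gap. Your growth mechanism does not establish $|U_3|+|W_3|>t$. Even granting that type-1 non-extremality transfers to the reduced structure and yields $\ge\beta' t^2$ crossings whenever $|U_k|,|[t]\setminus U_k|\approx t/2$, the case ``all crossings are backward $R(D^*)$-arcs'' is \emph{not} ruled out by your sketch: the partition $([t]\setminus W_k,\,W_k)$ you invoke can carry $\Omega(t^2)$ arcs entirely as $D^*$-arcs from $W_k\setminus A$ into $U_k\setminus A$, without any of them passing through $A$ and without contradicting either non-extremality hypothesis. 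One can in fact show, using (\ref{max-indeg-eq}), that if $U_k$ is forward-closed then the total in-degree into $U_k$ from outside is $O(\sqrt d)\,t^2$, which \emph{does} contradict type-1 provided $\sqrt d\ll\beta$—but even this only yields $|U_{k+1}|>|U_k|$, i.e.\ increment at least one per step, not the quantitative expansion needed to reach $U_3\cap W_3\ne\emptyset$. Your invocation of type-2 (``symmetric $R(G^*)$-edges are the key'') is never made concrete: those edges could all lie internally to $U_k$ and to $W_k$, contributing nothing to cross-reachability.

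The paper proceeds quite differently. Rather than searching for a walk between two endpoint clusters, it uses type-2 non-extremality up front to fix a single ``hub'' pair $\{i^*,j^*\}\in E(R(G^*))$ with $j^*\notin A$, and then routes each of $x,y$ separately into $N^-(j^*):=N^-_{R(D^*)}(j^*)\cup N_{R(G^*)}(j^*)$ within at most two reduced steps. The delicate case—when $N^+_R(z)$ is disjoint from $N^-(j^*)$ and sends no forward arc into it—is handled by a counting claim (Claim~\ref{non-ext-claim1}) showing that almost every cluster of $N^+_R(z)$ then has a $G^*$-neighbour and lies outside $A$; one of these clusters is promoted to a new $j^*$, and either $N^+_R(u)$ already meets it (so $P_1,P_2$ are trivial) or type-1 non-extremality supplies a single crossing arc to finish. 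The connecting segment of length four is then built through the $G^*$-edge $\{i^*,j^*\}$, whose bidirectionality is exactly what makes the final greedy lift go through. In short, the paper never needs a reduced walk of length six; it needs at most two reduced steps on each side plus a four-edge bridge, and the $G^*$-edge is used structurally rather than incidentally.
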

\begin{proof}
Let $V:=V(G)$. Since $(G,c)$ is not $\beta$-extremal of type 2, $|E(G^*)|\geq \beta |V|^2$. Consequently, since $0<\epsilon \ll d\ll \beta$ there is $j^*\in [t]$ such that $deg_{R(G^*)}(j^*)>0$ and $j^*\notin A.$ Indeed, otherwise, {using (\ref{eq-non-ext-A}),}
\[ \sum_{x\in V(G)}d_{G^*}(x) \leq (|A|\cdot s+|V_0|)|V|+(d+\epsilon)|V|^2/2<(6\sqrt{d}+2\epsilon)|V|^2<\beta |V|^2.\]

Let $i^*\in [t]$ be such that $\{i^*,j^*\}\in E(R(G^*))$  and let $x,y \in V$ with $x\neq y$. We construct a rainbow $x,y$-path $P$ in $G$ of length at most eight greedily. 
Let $C_P$ denote the colors that occur on $P$, and initially set $C_P := \emptyset$. 

\medskip

We first construct
vertex-disjoint paths $P_1= x\dots w_x$ and $P_2=y\dots w_y$, each of length at most two, so that $P_1\cup P_2$ is rainbow and $|N^+_{D}(w_x)\cap V_{j^*}|\geq \epsilon s$, $|N^+_{D}(w_y)\cap V_{j^*}|\geq \epsilon s$. 

For $z\in \{x,y\}$, define 
$$N^+_{R}(z) : =\{j\in [t] : |N^+_{D}(z)\cap V_j|\geq \epsilon |V_j|\}.$$
As in the proof of the previous lemma, we can argue that 
\begin{equation}\label{eq-non-ext-N+R}|N^+_R(z)|\geq (1/2-3\epsilon)t.
\end{equation}
Indeed, otherwise
$d^+_D(z)< (1/2-3\epsilon)t\cdot s+ (1/2+3\epsilon)t \cdot \epsilon s+ |V_0|,$
which is not possible.
Let $$N^-(j^*) := N^-_{R(D^*)}(j^*)\cup N_{R(G^*)}(j^*),$$ {and note that since $j^*\notin A$,
\begin{equation}\label{eq-non-extN^-}
|N^-(j^*)|\geq (1/2-\sqrt{d})t.
\end{equation}}
For $z=x$, and then $z=y$, we consider the following cases.

If $N^+_R(z)\cap N^-(j^*)\neq \emptyset$, then let $j\in N^+_R(z)\cap N^-(j^*)$, and let $w_z\in V_j$ be such that $|N^+_{D'}(w_z)\cap V_{j^*}|\geq \epsilon s$, and $c(zw_z) \not\in C_P$. Add $zw_z$ to $P$ and add $c(zw_z)$ to $C_P$. 

Now suppose $N^+_R(z)\cap N^-(j^*)= \emptyset$.
If there is $j\in N^+_R(z)$ and $i\in N^-(j^*)$ such that $ji\in E(R(D^*))$ or $\{i,j\}\in E(R(G^*))$, then let $u_z\in V_j$, $w_z\in V_i$ be such that $zu_zw_z$ is rainbow if $z=x$, and $zu_zw_z\cup P$ is rainbow if $z=y$, respectively, and $|N^+_{D}(w_z)\cap V_{j^*}|\geq \epsilon |V_{j^*}|$. Add $zu_zw_z$ to $P$ and add $c(zu_z), c(u_zw_z)$ to $C_P$. 

Otherwise, no such pair $(j,i)$ exists. Then 
all the out-going edges from $N^+_R(z)$ in $R(D^*)$ and edges in $R(G^*)$ incident to cluster in $N^+_R(z)$ land in $V(R(D^*))\setminus N^-(j^*).$ {In other words,}
\begin{align*}
	 V(R(D^*))\setminus N^-(j^*) 
	\supseteq  \{k : \exists j\in N^+_R(z),\  jk\in E(R(D^*)) \text{ or }\{k,j\}\in E(R(G^*))\}.
\end{align*}
{In addition,} $N^+_R(z) \subseteq V(R(D^*))\setminus N^-(j^*)$ as $N^+_R(z)\cap N^-(j^*)= \emptyset$.
By (\ref{eq-non-extN^-}), $|N^-(j^*)|\geq (1/2-\sqrt{d})t, $ and so
\begin{equation}\label{eq-non-ext-compu}
|V(R(D^*))\setminus N^-(j^*)| \le (1/2+\sqrt{d})t.\end{equation} 
{We have the following claim, the proof of which is deferred until the end of the section.}
{
\begin{claim}\label{non-ext-claim1}
All but at most $10\sqrt{d}t$ clusters $j\in N^+_R(z)$ satisfy $deg_{R(G^*)}(j)>0$ and $j\notin A.$\end{claim}}

Let $\tilde{N}^+_R(z)$ be the set of clusters $j\in N^+_R(z)$ that satisfy $deg_{R(G^*)}(j)>0$ and $j\notin A.$ By Claim \ref{non-ext-claim1} and
(\ref{eq-non-ext-N+R}), $|\tilde{N}^+_R(z)|\geq (1/2- 11\sqrt{d})t$. 

Let $u \in \{x,y\}\setminus \{z\}$.
If $N^+_R(u)\cap \tilde{N}^+_R(z) \neq \emptyset$, 
then replace $j^*$ by a cluster from $N^+_R(u)\cap \tilde{N}^+_R(z)$, update $i^*$ so that $\{i^*,j^*\}\in E(R(G^*))$, and let $w_x := x$, $w_y := y$, $P_1 := x$, $P_2 := y$, and update $P$ to $P_1\cup P_2$ and set $C_P:=\emptyset$.

Otherwise, {keep the original $j^*$ and note that by (\ref{eq-non-extN^-}) and (\ref{eq-non-ext-N+R}),} $$|N^+_R(u)\cap N^-(j^*) |\geq (1/2-13\sqrt{d})t.$$
Since $(G,c)$ is not $\beta$-extremal of type 1 there exist $i \in \tilde{N}^+_R(z)$ and $j \in N^+_R(u)\cap N^-(j^*) $ such that 
$ij\in E(R(D^*))$
or $ji\in E(R(D^*))$. 
Since we are in the case where there is no edge from $N^+_R(z)$ to $N^-(j^*) $, we have $ji\in E(R(D^*))$. 
Now replace $j^*$ by this cluster $i$, update $i^*$ so that $\{i^*,j^*\}\in E(R(G^*))$, and let $w_z := z$, $w_u \in V_j$ be such that $|N^+_{D}(w_u)\cap V_{j^*}|\geq \epsilon s$, 
$\{P_1, P_2\} := \{z, uw_u\}$, and update $P$ by $P_1\cup P_2$ and $C_P$ by $\{c(uw_u)\}$.

\medskip

Thus in all cases, we have constructed vertex-disjoint path $P_1$, $P_2$ where $P_1$ is an $x,w_x$-path and $P_2$ a $y,w_y$-path. Now we construct $P$ by connecting $w_x, w_y$ with a rainbow path of length four without using any color from $C_P$. 

We have $|N^+_{D}(w_x)\cap V_{j^*}|\geq \epsilon s$, 
$|\{u\in N^+_{D}(w_x)\cap V_{j^*} : c(w_xu)\in C_P\}|\le 4$, and 
\begin{align*}
& |\{u\in N^+_{D}(w_x)\cap V_{j^*}: |N_{G^*}(u)\cap V_{i^*}|{\geq} \epsilon s\}|  \\
\ge \ & (|E_{G^*}(N^+_{D}(w_x)\cap V_{j^*}, V_{i^*})| - |N^+_{D}(w_x)\cap V_{j^*}|\epsilon s)/s \ge \epsilon^2 s.
\end{align*}
Therefore, we may take $z_1 \in N^+_{D}(w_x)\cap V_{j^*}$ such that $c(w_xz_1) \not\in C_P$ and $|N_{G^*}(z_1)\cap V_{i^*}|\geq \epsilon s$.
Add $w_xz_1$ to $P$ and add $c(w_xz_1)$ to $C_P$.

Since $|N^+_{D}(w_y)\cap V_{j^*}|\geq \epsilon s$ and $|N_{G^*}(z_1)\cap V_{i^*}|\geq \epsilon s$, there are at least $(d-\epsilon)|N^+_{D}(w_y)\cap V_{j^*}|\cdot |N_{G^*}(z_1)\cap V_{i^*}|\ge (d-\epsilon)\epsilon^2 s^2$ properly colored edges in $G^*$ between $N^+_{D}(w_y)\cap V_{j^*}$ and $N_{G^*}(z_1)\cap V_{i^*}$. Let $E^*$ denote the set of these edges. 
Now for each vertex $u\in N^+_{D}(w_y)\cap V_{j^*}$, if $c(w_yu)\in C_P$, we remove all edges incident to $u$ from $E^*$; otherwise,
we remove the edges $uv\in E^*$ such that $c(uv) \in C_P\cup \{c(w_yu)\}$ or $c(z_1v) \in C_P\cup \{c(w_yu)\}$. Thus, in total, we have removed at most $|C_P|+2(|C_P|+1)s\le 13s$ edges, and hence we may take an edge remaining in $E^*$, say $z_2z_3$, where $z_2\in N^+_{D}(w_y)\cap V_{j^*}$ and $z_3 \in N_{G^*}(z_1)\cap V_{i^*}$. 

Note that $c(z_1z_3)\ne c(z_2z_3)$ as $G^*$ is properly colored, from the choice of $z_2,z_3$, $xP_1w_xz_1z_3z_2w_yP_2y$ is a rainbow path of length at most 8. 
\end{proof}

\begin{proof}[Proof of Claim \ref{non-ext-claim1}]
Note that by Lemma~\ref{cluster-graph-lem} {$R(D^*)$ is an oriented graphs, and so} for all $i,j \in [t]$ at most one of $ij, ji$ is in $E(R(D^*))$, 
and $R(D^*)$ and $R(G^*)$ are edge disjoint.  
Thus, {using (\ref{eq-non-ext-compu}),}
\begin{align*}
	& \sum_{j\in N^+_R(z)} deg^+_{R(D^*)}(j)+deg_{R(G^*)}(j)  \\
	\le \ & |E_{D^*}(N^+_R(z), V(R(D^*))\setminus N^-(j^*))| + 2|E_{G^*}(N^+_R(z), V(R(D^*))\setminus N^-(j^*))| \\
	\le \ & |E_{D^*}[V(R(D^*))\setminus N^-(j^*))]| +2 |E_{G^*}[V(R(D^*))\setminus N^-(j^*))]| \\
	\le \ & ((1/2+\sqrt{d})t)^2/2 + |E_{G^*}[V(R(D^*))\setminus N^-(j^*))]|.
\end{align*}

On the other hand, by (\ref{eq-outdeg}), $deg^+_{R(D^*)}(j)+deg_{R(G^*)}(j)\geq {(1/2-2d-5\epsilon)}t$ for all $j\in [t]$. So, by  (\ref{eq-non-ext-N+R}),
\begin{align*}
	 \sum_{j\in N^+_R(z)} deg^+_{R(D^*)}(j)+deg_{R(G^*)}(j)  
	\ge  (1/2-3\epsilon)t \cdot (1/2-2d-5\epsilon)t.
\end{align*}
Thus $|E_{G^*}[V(R(D^*))\setminus N^-(j^*))]| \ge (1/4 - 2\sqrt d)t^2/2$, and hence
all but at most $10\sqrt{d}t$ clusters $j\in N^+_R(z)$ satisfy $deg_{R(G^*)}(j)>0$ and $j\notin A.$ 
\end{proof}

\section{Extremal Cases}\label{ext-section}
In this section, we will discuss extremal cases. Recall that $(G,c)$ is an edge-colored graph on $n$ vertices such that $\delta^c(G)\geq \frac{n}{2}$ and subject to this $E(G)$ is minimal. Let $D:=D_G$ be the digraph from Definition \ref{defnD}, and let $G^*$ be the graph from Definition \ref{defG*}.

\subsection{Extremal Case 1}
This case is basic. Let $\beta>$ be such that $ \sqrt{\beta}\leq 1/320$, let  $n$ be sufficiently large so that \begin{equation}\label{n-ex-case1}1/\sqrt{n}<\beta,\end{equation} and suppose $G$ is $\beta$-extremal of type 1. Then there exists a partition $\{V_1, V_2\}$ of $V(G)$ such that for $i=1,2$, $|V_i|\geq (1/2-\beta)n$ and the number of arcs in $D$ between $V_1$ and $V_2$ is at most $\beta n^2.$ Let $V_i':=\{v\in V_i: |N_{G^*}(v)\cap V_i|\geq (1/2-\sqrt{\beta})n\}$. We have the following bound.
\begin{fact}
 $|V_i'|\geq (1/2-7\sqrt{\beta})n.$
\end{fact}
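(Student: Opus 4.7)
The plan is to argue that under $\beta$-extremality of type 1, the oriented graph $D$ restricted to $V_i$ is almost complete. Every vertex has out-degree more than $n/2 - \sqrt n$ in $D$ by Fact~\ref{fact-outdeg}, and only $\beta n^2$ arcs cross between $V_1$ and $V_2$, while $V_i$ itself has size close to $n/2$. Hence the number of arcs present in $D[V_i]$ is close to the trivial upper bound $|V_i|(|V_i|-1)$. Because $G^*$ records exactly the pairs where both arcs of $D$ appear, $G^*[V_i]$ is also almost complete, which forces most $v \in V_i$ to have large $G^*$-degree inside $V_i$.

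Concretely, write $m := |V_i|$ and note $(1/2-\beta)n \leq m \leq (1/2+\beta)n$, together with $\sqrt n \leq \beta n$ (from $1/\sqrt n < \beta$ in (\ref{n-ex-case1})). Summing Fact~\ref{fact-outdeg} over $V_i$ and subtracting the at most $\beta n^2$ arcs going from $V_i$ to $V_{3-i}$ gives $e(D[V_i]) \geq m(n/2-\sqrt n) - \beta n^2$. Next, for every $v \in V_i$ the identity $N_{G^*}(v) = N^+_D(v) \cap N^-_D(v)$ combined with inclusion-exclusion inside $V_i$ yields
$$|N_{G^*}(v) \cap V_i| \;\geq\; |N^+_D(v) \cap V_i| + |N^-_D(v)\cap V_i| - m.$$
Summing over $v \in V_i$ and using that both $\sum_{v\in V_i}|N^+_D(v)\cap V_i|$ and $\sum_{v\in V_i}|N^-_D(v)\cap V_i|$ equal $e(D[V_i])$ gives
$$\sum_{v \in V_i}|N_{G^*}(v)\cap V_i| \;\geq\; 2\,e(D[V_i]) - m^2 \;\geq\; m(m-1) - C\beta n^2$$
for an explicit small constant $C$, where the last step absorbs the $\sqrt n$ and $m - n/2$ errors into $\beta n^2$ using $\sqrt n \leq \beta n$ and $|m - n/2| \leq \beta n$.

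The final step is an averaging argument. Let $k := |V_i'|$. Vertices in $V_i'$ contribute at most $m-1$ to the sum, and vertices outside contribute strictly less than $(1/2-\sqrt\beta)n$ each, so
$$k(m-1) + (m-k)(1/2-\sqrt\beta)n \;\geq\; m(m-1) - C\beta n^2,$$
which rearranges to $(m-k)\bigl[(m-1) - (1/2-\sqrt\beta)n\bigr] \leq C\beta n^2$. The bracket satisfies $(m-1) - (1/2-\sqrt\beta)n \geq (\sqrt\beta - \beta)n - 1 \geq (\sqrt\beta/2)\,n$ for $n$ large, using $\sqrt\beta \leq 1/320$. Therefore $m - k = O(\sqrt\beta\, n)$, and combining with $m \geq (1/2-\beta)n$ gives $|V_i'| = k \geq (1/2 - 7\sqrt\beta)n$.

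The only real obstacle is the bookkeeping of constants to land at exactly $7\sqrt\beta$: the $\sqrt n$ error from Fact~\ref{fact-outdeg}, the deviation of $m$ from $n/2$, and the averaging loss each contribute a term of order $\sqrt\beta\, n$. The assumption $\sqrt\beta \leq 1/320$ provides the slack needed to keep the total below $7\sqrt\beta\, n$, and the hypothesis $1/\sqrt n < \beta$ is precisely what converts the $\sqrt n$ out-degree error into a $\beta n$ error so that it can be absorbed.
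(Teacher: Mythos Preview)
Your proof is correct and follows essentially the same approach as the paper: both arguments use the out-degree bound plus $\beta$-extremality to show $e(D[V_i])$ is nearly maximal, invoke the inclusion--exclusion identity $|N_{G^*}(v)\cap V_i|\ge |N^+_D(v)\cap V_i|+|N^-_D(v)\cap V_i|-|V_i|$, and finish by averaging. The only cosmetic difference is that the paper packages the double-count as upper and lower bounds on $\sum_{v\in V_i} d^+_D(v)$, whereas you bound $\sum_{v\in V_i}|N_{G^*}(v)\cap V_i|$ directly; the two are equivalent rearrangements of the same inequalities.
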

\begin{proof} 
	By Fact \ref{fact-outdeg},  
	\begin{equation}\label{eq1-fact5.1}
		\sum_{v\in V_i} d_D^+(v)\geq (1/2-1/\sqrt{n})n |V_i|.
	\end{equation}
	At the same time, for $v\in V_i\setminus V_i'$, $|N_{G^*}(v)\cap V_i|< (1/2-\sqrt{\beta})n <|V_i|-(\sqrt{\beta}-\beta)n,$ 
	and so $|N^+_D(v)\cap V_i|+|N^-_D(v)\cap V_i|< 2|V_i|-(\sqrt{\beta}-\beta)n$. Therefore,
	\[
		2\sum_{v\in V_i}|N^+_D(v)\cap V_i|= \sum_{v\in V_i}(|N^+_D(v)\cap V_i|+|N^-_D(v)\cap V_i|)< 2|V_i|^2 -(\sqrt{\beta}-\beta)n|V_i\setminus V_i'|.
	\]
	Consequently,
	\begin{equation}\label{eq2-fact5.1}
		\sum_{v\in V_i} d_D^+(v)< \beta n^2+|V_i|^2 - (\sqrt{\beta} -\beta)n|V_i\setminus V_i'|/2.
	\end{equation} 
	By (\ref{eq1-fact5.1}), (\ref{eq2-fact5.1}) and (\ref{n-ex-case1}),
	$|V_i'|> \frac{\sqrt{\beta} -13\beta}{\sqrt{\beta} -\beta}|V_i|,$ and so $|V_i'|\geq (1/2-7\sqrt{\beta})n$ with some room to spare.
\end{proof}

\begin{lemma}\label{ext-type1-lem}
    Let $\beta>0$ and $n\in \mathbb Z^+$ be such that $1/n\ll \beta \ll 1$. If $(G,c)$ is an edge-colored graph on $n$ vertices such that $\delta^c(G)\geq \frac{n}{2}$ which is $\beta$-extremal of type 1, then for every two distinct vertices $x,y\in V(G)$ there is a rainbow $x,y$-path in $G$ of length at most five.
\end{lemma}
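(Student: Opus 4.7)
My plan is to carry out a case analysis on the locations of $x, y$ relative to the partition $\{V_1, V_2\}$ and the cores $V_1', V_2'$. Three structural facts drive the argument. First, $G^*$ is properly edge-colored at every vertex: the out-arcs of $D_G$ at any $v$ have distinct colors by construction, and hence so do the $G^*$-edges at $v$; in particular any length-two walk in $G^*$ through a fixed internal vertex is rainbow at that vertex. Second, for $v \in V_i'$ we have $|N_{G^*}(v) \cap V_i'| \geq (1/2 - 9\sqrt{\beta})n$ (combining $|N_{G^*}(v) \cap V_i| \geq (1/2 - \sqrt{\beta})n$ with $|V_i \setminus V_i'| \leq 8\sqrt{\beta}n$), so any two vertices of $V_i'$ share a common $G^*$-neighbor inside $V_i'$ by inclusion-exclusion. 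Third, each cross $G$-edge between $V_1$ and $V_2$ contributes at least one cross arc to $D_G$ (the arc from the leaf side of the containing star $F_\alpha$), so $|E_G(V_1, V_2)| \leq \beta n^2$, while, taking WLOG $|V_1| \leq n/2$, every $v \in V_1$ has at least one cross-neighbor in $G$ by minimum degree.

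The case $x, y \in V_i'$ is immediate: a common $G^*$-neighbor $z \in V_i'$ gives the rainbow path $xzy$ of length two. The central case is $x \in V_1'$, $y \in V_2'$, where I aim for a length-five path $x - a_1 - u_1 - u_2 - a_2 - y$ with $u_1 u_2$ a cross-bridge in $G$, $a_1 \in V_1'$ a common $G^*$-neighbor of $x, u_1$, and $a_2 \in V_2'$ a common $G^*$-neighbor of $u_2, y$. If some cross $G$-edge between $V_1'$ and $V_2'$ exists, this construction succeeds after selecting $a_1, a_2$ to avoid the constantly many forbidden color repetitions, since the $G^*$-neighborhoods involved have linear size. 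If no such direct bridge exists, all cross-edges from $V_1'$ land in $V_2 \setminus V_2'$; I then use a pigeonhole estimate to count ``rogue'' vertices (those in $V_2$ with more than $\sqrt{\beta}n$ cross-neighbors) as at most $\sqrt{\beta}n$, so that a non-rogue $u \in V_2 \setminus V_2'$ has $\geq n/2 - O(\sqrt{\beta})n$ $G$-neighbors in $V_2'$, and the intersection $N_G(u) \cap V_2' \cap N_{G^*}(y)$ is non-empty by a union bound. Provided such a non-rogue hub $u$ also has a $V_1'$-neighbor, this closes the length-five path $x - a_1 - v_1 - u - v_2 - y$ with $v_1 \in N_G(u) \cap V_1'$ and $v_2 \in N_G(u) \cap V_2' \cap N_{G^*}(y)$.

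When one of $x, y$ lies outside $V_1' \cup V_2'$, I prepend a single $G$-edge into the cores, selecting (when possible) the same side as the other endpoint so that the remaining sub-path is shorter; the fact that every vertex has $\geq n/2 - 16\sqrt{\beta}n$ $G$-neighbors in $V_1' \cup V_2'$ makes this step always possible. The main obstacle is the pathological sub-case of the central case where no direct $V_1'$-$V_2'$ bridge exists \emph{and} every non-rogue $u \in V_2 \setminus V_2'$ also fails to have a $V_1'$-neighbor: here I would route through the $V_1$-neighbor $y_1$ of $y$ (existent by minimum degree whenever $|V_1|=n/2$), which under the pathological assumption must lie in $V_1 \setminus V_1'$ and provides an alternate bridge of the form $\ldots - y_1 - y$. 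Verifying that all five colors can be made distinct simultaneously in each sub-case, and carefully tracking the pigeonhole counts against the $\beta n^2$ cross-arc budget, is the technical heart of the argument; the budget of five matches exactly two $G^*$-steps on each side plus one bridge, leaving essentially no slack.
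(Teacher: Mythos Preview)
Your structural observations about $G^*$ being properly colored and the large common $G^*$-neighborhoods inside each $V_i'$ are correct and are exactly what the paper uses. However, the overall architecture of your case analysis has real gaps that the paper's argument avoids.

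\textbf{Gap in the cross case.} In your ``pathological sub-case'' (no $V_1'$--$V_2'$ bridge, and every non-rogue $u\in V_2\setminus V_2'$ lacks a $V_1'$-neighbor) you propose to route through a $V_1$-neighbour $y_1$ of $y$. But you have already taken WLOG $|V_1|\le n/2$, so $|V_2|\ge n/2$ and $y\in V_2'$ need not have any neighbour in $V_1$ at all. Thus this sub-case is not handled. (It can be patched by a finer count on rogue vertices, but you have not done this.)

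\textbf{Gap for vertices outside the cores.} When $x\notin V_1'\cup V_2'$ you plan to prepend a single $G$-edge into $V_1'\cup V_2'$, preferably on the same side as $y$. But ``when possible'' is doing a lot of work: if $x$ has neighbours only in $V_1'$ while $y\in V_2'$, the prepended edge plus your length-five cross construction gives length six. Worse, if both $x,y$ lie outside the cores and are forced to opposite sides, you get length seven. So the length bound of five is not achieved.

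\textbf{What the paper does instead.} The paper avoids both problems by a single clean trick: it assigns every leftover vertex $w\in W:=V(G)\setminus(V_1'\cup V_2')$ to a side $W_i$ according to where $w$ has at least $n/5$ colourful neighbours in $V_i'$. Then the same-side case ($x,y\in V_i'\cup W_i$) is handled by a length-four path $x\,u\,z\,u'\,y$ with $u,u',z\in V_i'$ and $z$ a common $G^*$-neighbour of $u,u'$. For the cross case, WLOG $|V_1'\cup W_1|\le n/2$; then $x\in V_1'\cup W_1$ already has some $G$-neighbour $z\in V_2'\cup W_2$, and one simply runs the same-side argument on the pair $z,y$ while avoiding the colour $c(xz)$, giving a length-five path $x\,z\,\dots\,y$. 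The point you missed is that you do not need to locate a bridge somewhere in the graph and then walk to it from both ends; the cross-edge can be taken to emanate from $x$ itself, collapsing one entire side of your construction and eliminating the pathological sub-cases.
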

\begin{proof} Let $W:=V(G)\setminus (V_1'\cup V_2')$.  Note that $|W|\leq 14\sqrt{\beta}n$ and so, for every $w\in W$ there is $i=1,2$ such that $|N^c(w)\cap V_i'|\geq n/5.$

Let $W_1:= \{w\in W:|N^c(w)\cap V_1'|\geq n/5\}$, and $W_2:=W\setminus W_1$. Let $M$ be a set of colors, and let $S\subseteq V_1'\cup V_2'$ be a set of vertices such that $|M\cup S|\leq 10$. 
  
  For any $u, u'\in V_i'\setminus S$, $|N_{G^*}(u)\cap N_{G^*}(u')\cap V_i'|> (1/2-{ 10}\sqrt{\beta})n >n/3,$ and so there is $z\in (N_{G^*}(u)\cap N_{G^*}(u')\cap V_i')\setminus S$ such that $uzu'$ is a rainbow path that does not contain colors from $M$. In addition, for every $w\in W_i\cup V_i$ there is a vertex $u\in V_i'\setminus  S$ such that $wu$ is an edge and $c(wv_i)\notin M$. 
  
  Let $x,y$ be two arbitrary vertices. First suppose that for some $i=1,2$,  $x,y\in V_i'\cup W_i$.  There is $u\in V_i'$ such that $xu$ is an edge. Add $u$ to $S$ and add $c(xu)$ to $M$. Then there $u'\in V_i\setminus S$ such that  $yu'$ is an edge such that $c(yu')\notin M$. Add  $c(yu)$ to $M$ and $y$ to $S$. Finally, there $z\in V_i' \setminus S$ such that $uzu'$ is a path of length two that does not contain colors from $M$. Then $xuzu'y$ is a rainbow $x,y$-path of length four. 
  
  Now suppose $x\in V_1'\cup W_1$ and $y\in V_2'\cup W_2$.
  Without loss of generality, $|V_1'\cup W_1|\leq n/2$, and so for every $v\in V_1'\cup W_1$ there is $w\in V_2'\cup W_2$ such that $vw\in E(G)$. Let $z\in V_2'\cup W_2$ be a neighbor of $x$ in $G$. There is a rainbow  $y,z$ path $P$ of length four that avoids $c(xz)$. Then $xzPy$ is a rainbow $x,y$ path of length five.
 \end{proof}

\subsection{Extremal Case 2} We will now consider the extremal case  of type 2. Suppose $\beta>0$ is sufficiently small and $|E(G^*)|\leq \beta n^2$. In addition, suppose $n$ is sufficiently large with respect to $1/\beta$.  
Let $U:=\{v\in V(G): d^-_{D}(v)\geq (1/2-\sqrt{\beta}) n\}$ and let $W:= V(G)\setminus U.$ 
\begin{fact} \label{factU-ext}$|U|\geq (1-3\sqrt{\beta}/2)n.$
\end{fact}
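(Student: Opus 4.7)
The plan is to apply a double-counting argument to the arcs of $D$, using Fact~\ref{fact-outdeg} on one side and a structural bound on $d^+_D(v)+d^-_D(v)$ that exploits the smallness of $|E(G^*)|$ on the other.

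First I would establish the key pointwise bound: for every $v\in V(G)$,
\[
d^+_D(v)+d^-_D(v)\le (n-1)+d_{G^*}(v),
\]
since each other vertex $u$ contributes $1$ to each of $d^+_D(v)$ and $d^-_D(v)$, and contributes $2$ to their sum exactly when $\{u,v\}\in E(G^*)$. Combined with $d^+_D(v)> n/2-\sqrt{n}$ from Fact~\ref{fact-outdeg}, this gives
\[
d^-_D(v)\le n/2-1+\sqrt{n}+d_{G^*}(v)
\]
for every $v$.

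Next I would split the sum $\sum_v d^-_D(v)$ according to membership in $U$ or $W$: for $v\in W$ I use the defining inequality $d^-_D(v)<(1/2-\sqrt{\beta})n$, and for $v\in U$ the pointwise bound above. Summing and using $\sum_{v\in U}d_{G^*}(v)\le 2|E(G^*)|\le 2\beta n^2$ yields
\[
\sum_v d^-_D(v)\le |W|(1/2-\sqrt{\beta})n+|U|(n/2-1+\sqrt{n})+2\beta n^2.
\]
Since $\sum_v d^-_D(v)=\sum_v d^+_D(v)\ge n(n/2-\sqrt{n})$, again by Fact~\ref{fact-outdeg}, combining the two bounds produces a linear inequality in $|W|$. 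Solving it, and using $n\gg 1/\beta$ so that the $\sqrt{n}$ error terms are negligible compared to $\sqrt{\beta}n$, yields $|W|\le (3/2)\sqrt{\beta}n$, i.e.\ $|U|\ge (1-3\sqrt{\beta}/2)n$.

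The main obstacle is recognizing and proving the structural bound $d^+_D(v)+d^-_D(v)\le (n-1)+d_{G^*}(v)$; without exploiting the smallness of $|E(G^*)|$ through this inequality, one is forced to use the trivial $d^-_D(v)\le n-1$ for $v\in U$, which leads only to a bound on $|W|$ weaker than $(1-2\sqrt{\beta})n$. Once the structural bound is secured, the rest is elementary arithmetic.
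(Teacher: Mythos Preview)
Your approach is essentially identical to the paper's: both double-count $\sum_v d^-_D(v)=\sum_v d^+_D(v)$, using Fact~\ref{fact-outdeg} for the lower bound and the pointwise inequality $d^+_D(v)+d^-_D(v)\le (n-1)+d_{G^*}(v)$ together with $|E(G^*)|\le\beta n^2$ for the upper bound on the $U$-part. One arithmetic caveat: with the (correct) term $2\beta n^2$ that you write, solving the resulting inequality gives only $|W|\le \bigl(2\sqrt{\beta}+o(1)\bigr)n$, not $(3/2)\sqrt{\beta}\,n$; the paper obtains the stated constant $3\sqrt{\beta}/2$ by writing $\beta n^2$ rather than $2\beta n^2$ at this step, and in any case the downstream argument only needs $|W|=O(\sqrt{\beta})n$.
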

\begin{proof} By Fact \ref{fact-outdeg}, $\delta^+(D)> n/2- \sqrt{n}.$ We have
$$\frac{n^2}{2}-n^{3/2}< \sum d^+_D(v)=\sum d^-_D(v)\leq $$
$$|U|(n/2+\sqrt{n}) +(n-|U|)(1/2-\sqrt{\beta})n +\beta n^2.$$
This gives $|U|\geq \frac{\sqrt{\beta}n-\beta n-\sqrt{n}}{\sqrt{\beta}n+\sqrt{n}}n> (1-3\sqrt{\beta}/2)n,$ as $1/\beta \ll n.$
\end{proof}

\begin{fact} \label{factXY-ext} Let $X,Y\subseteq V(G)$ be such that $|X|\geq (1/2-2\sqrt{\beta})n$ and $|Y|\geq (1/2-2\sqrt{\beta})n$. Then there are at least $\beta n^2$ arcs in $D$ from $X$ to $Y$.
\end{fact}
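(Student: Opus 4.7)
The plan is to reduce to $|X| = |Y| = \lceil (1/2 - 2\sqrt{\beta})n \rceil$ by passing to subsets of the given sets, which is permitted since $e_D(\cdot,\cdot)$ is monotone nondecreasing in both arguments. The main computation is a sharp lower bound on $e_D(X, V \setminus X)$: by Fact~\ref{fact-outdeg}, $\sum_{x \in X} d^+_D(x) \geq |X|(n/2 - \sqrt{n})$, and $e_D(X, X) \leq \binom{|X|}{2} + |E(G^*)|$ because every unordered pair in $X$ contributes at most two arcs to $e_D(X, X)$, with a contribution of two forcing the pair to lie in $E(G^*)$. Combining these gives
\[ e_D(X, V \setminus X) = \sum_{x \in X} d^+_D(x) - e_D(X, X) \geq |X|(n/2 - \sqrt{n}) - \binom{|X|}{2} - \beta n^2. \]
Substituting $|X| = (1/2 - 2\sqrt{\beta})n$, the $\sqrt{\beta}$-terms from $|X|(n/2) = (1/4 - \sqrt{\beta})n^2$ and $|X|^2/2 = (1/8 - \sqrt{\beta} + 2\beta)n^2$ cancel, yielding $e_D(X, V \setminus X) \geq n^2/8 - 3\beta n^2 - O(n^{3/2})$.

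I would then decompose $e_D(X, V \setminus X) = e_D(X, Y \setminus X) + e_D(X, V \setminus (X \cup Y))$, set $c := |X \cap Y|$, and note $|V \setminus (X \cup Y)| = 4\sqrt{\beta}n + c$. In Case A ($c < 3\sqrt{\beta}n$), we have $e_D(X, V \setminus (X \cup Y)) \leq |X| \cdot 7\sqrt{\beta}n \leq (7/2)\sqrt{\beta}n^2$, so $e_D(X, Y) \geq e_D(X, Y \setminus X) \geq n^2/8 - O(\sqrt{\beta})n^2 \gg \beta n^2$ for $\beta$ sufficiently small. In Case B ($c \geq 3\sqrt{\beta}n$), we have $e_D(X, Y) \geq e_D(C, C)$ for $C := X \cap Y$; since $|E(D)| - |E(G^*)| \geq n^2/2 - O(n^{3/2}) - \beta n^2$ counts the edges of the underlying undirected graph of $D$, at most $2\beta n^2$ unordered pairs of $V$ have no arc in $D$ (using $n$ large with respect to $1/\beta$), so every pair in $C$ with an arc contributes at least one to $e_D(C, C)$ and hence $e_D(C, C) \geq \binom{|C|}{2} - 2\beta n^2 \geq 4\beta n^2 - 2\beta n^2 \geq \beta n^2$, using $\binom{|C|}{2} \geq 9\beta n^2/2 - o(\beta n^2)$ for $|C| \geq 3\sqrt{\beta}n$.

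The main obstacle will be recognizing the exact cancellation of the $\sqrt{\beta}$-terms in the bound on $e_D(X, V \setminus X)$: a direct counting argument combining only Fact~\ref{factU-ext} with pointwise in/out-degree inequalities loses $O(\sqrt{\beta})n^2$ on both sides and falls short of the $\beta n^2$ target. The proof thus hinges on the structural inequality $e_D(X, X) \leq \binom{|X|}{2} + |E(G^*)|$ and the choice of threshold $3\sqrt{\beta}n$ in the case split that produces exactly the desired $\beta n^2$ bound in Case B.
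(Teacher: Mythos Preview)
Your proof is correct and follows essentially the same approach as the paper: both split on whether $|X\cap Y|\geq 3\sqrt{\beta}n$, and both rely on the key inequality $e_D(Z,Z)\leq \binom{|Z|}{2}+|E(G^*)|$ together with Fact~\ref{fact-outdeg}. The only cosmetic differences are that you first reduce to $|X|=|Y|=\lceil(1/2-2\sqrt{\beta})n\rceil$ and organise Case~A around $e_D(X,V\setminus X)$, whereas the paper works directly with $X\setminus Y$; the arithmetic and the $\sqrt{\beta}$-cancellation you highlight are identical in both.
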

\begin{proof}
First suppose $|X\cap Y|\geq 3\sqrt{\beta}n$. If the number of arcs in $D[X\cap Y]$ is less than $\beta n^2$, then, since $|E(G^*)|\leq \beta n^2$, there are less than $$n^2/2 +|E(D[X\cap Y])|+ \beta n^2 -{3\sqrt{\beta} n \choose 2}\leq n^2/2 +2\beta n^2 -{3\sqrt{\beta} n \choose 2}$$ arcs in $D$. 
This contradicts $\delta^+(D)>n/2-\sqrt{n}.$ Now suppose $|X\cap Y|<3\sqrt{\beta}n$. Then $|V(D)\setminus (X\cup Y)|<7\sqrt{\beta}n$, and $$(1/2+2\sqrt{\beta})n\geq |X\setminus Y|\geq (1/2-5\sqrt{\beta})n.$$ If there are less than $\beta n^2$ arcs from $X$ to $Y$, then
$$|X\setminus Y|(n/2-\sqrt{n}) < \sum_{x\in X\setminus Y} d_D^+(x)< 2\beta n^2 +7\sqrt{\beta}n|X\setminus Y| +|X\setminus Y|^2/2,$$
which contradicts $1/\beta \ll n$.
\end{proof}
Recall that if $wv\in E(D)$ and $\alpha=c(wv)$ then there are at most $\sqrt{n}$ vertices $x\in N^-_D(v)$ such that $c(xw)=\alpha.$ The following lemma will be used repeatedly in the rest of the argument.
\begin{lemma}\label{lemmaxu-ext}
Let $M$ be a set of colors and let $S$ be a set of vertices and suppose $|M\cup S|\leq \sqrt{n}$. Let $u\in U\setminus S$ be such that there are at least $|M|/\beta +|S| +1$ vertices $z\in N^-_D(u)\cap U$ with $c(zu)\notin M$. Then for every $x\in V(G)\setminus (S\cup \{u\})$ there is a rainbow $x,u$-path in $G$ of length four that avoids colors from $M$ and vertices from $S.$  
\end{lemma}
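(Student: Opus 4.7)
My plan is to construct the length-four rainbow path $x$-$a$-$b$-$z$-$u$ by first selecting $z$ from the hypothesized set, then applying Fact~\ref{factXY-ext} between an out-neighborhood of $x$ and an in-neighborhood of $z$ to produce many candidate pairs $(a,b)$, and finally extracting one pair whose color does not conflict. Choosing $z$ up front freezes $c(zu)$ and turns the remaining task into a single-forbidden-color-set problem with forbidden set $M\cup\{c(zu)\}$.

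Let $Z_0:=\{z\in N^-_D(u)\cap U: c(zu)\notin M\}$; the hypothesis gives $|Z_0|\ge|M|/\beta+|S|+1$, so $Z:=Z_0\setminus(S\cup\{u,x\})$ has size at least $|M|/\beta$. For any color $\alpha$, at most $n$ arcs of color $\alpha$ live in $D$ (one per tail, by Definition~\ref{defnD}), so $\sum_v r_\alpha(v)\le n$, where $r_\alpha(v)$ denotes the in-arc count of color $\alpha$ at $v$. Averaging over $Z$ produces a $z\in Z$ with $\sum_{\alpha\in M}r_\alpha(z)\le 2\beta n$; moreover $zu\in E(D)$ forces $d_{F_{c(zu)}}(z)\le\sqrt n$, so $r_{c(zu)}(z)\le\sqrt n$ as well. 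The $|M|/\beta$ term in the hypothesis is exactly what makes this averaging give a sub-linear loss.

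Now define
\begin{align*}
A &:= \{a\in N^+_D(x)\setminus(S\cup\{u,z\}):\ c(xa)\notin M\cup\{c(zu)\}\},\\
Y &:= \{b\in N^-_D(z)\setminus(S\cup\{u,x\}):\ c(bz)\notin M\cup\{c(zu)\}\}.
\end{align*}
Since out-arc colors at $x$ are distinct, at most $|M|+1$ candidates are killed by the color constraint, and Fact~\ref{fact-outdeg} yields $|A|\ge(1/2-2\sqrt\beta)n$; for $Y$, $z\in U$ gives $d^-_D(z)\ge(1/2-\sqrt\beta)n$, and the color losses add at most $2\beta n+\sqrt n$ by the previous paragraph, so $|Y|\ge(1/2-2\sqrt\beta)n$ as well. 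Fact~\ref{factXY-ext} then produces at least $\beta n^2$ arcs in $D$ from $A$ to $Y$.

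From this pool I extract an arc $ab\in E_D(A,Y)$ with $c(ab)\notin M\cup\{c(xa),c(bz),c(zu)\}$ and $c(xa)\ne c(bz)$, which makes $xabzu$ rainbow. Counting forbidden arcs crudely: at most $|M|\cdot|A|\le n^{3/2}$ have $c(ab)\in M$; at most $|A|$ have $c(ab)=c(xa)$; at most $|Y|\sqrt n\le n^{3/2}$ have $c(ab)=c(bz)$, because $bz\in E(D)$ forces $d_{F_{c(bz)}}(b)\le\sqrt n$; at most $n$ have $c(ab)=c(zu)$; and the most delicate bound, at most $\sum_{a\in A}r_{c(xa)}(z)\le d^-_D(z)\le n$ pairs satisfy $c(xa)=c(bz)$. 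This last bound is the main obstacle: each individual $r_{c(xa)}(z)$ could be as large as $d_{F_{c(xa)}}(z)$, but because the colors $\{c(xa):a\in A\}$ are pairwise distinct the sum telescopes into a single in-degree at $z$. The total number of forbidden arcs is $O(n^{3/2})\ll\beta n^2$, so a good arc $(a,b)$ exists and $xabzu$ is the required rainbow $x,u$-path of length four.
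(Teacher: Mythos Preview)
Your proof is correct and follows the same skeleton as the paper's: choose a suitable $z\in N^-_D(u)\cap U\setminus S$ with $c(zu)\notin M$ and few in-arcs coloured from $M$, build the two large sets $A\subseteq N^+_D(x)$ and $Y\subseteq N^-_D(z)$, apply Fact~\ref{factXY-ext} to get $\beta n^2$ arcs from $A$ to $Y$, and then discard the arcs that would create a colour repeat.

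There are two technical differences worth noting. First, the paper selects $z$ by discarding the at most $|M|/\beta$ vertices of $Z$ that are centres of a star of size $\ge\beta n$ in some $F_\alpha$ with $\alpha\in M$ (using the star-forest structure directly), whereas you obtain the same conclusion by averaging $\sum_{\alpha\in M}r_\alpha(\cdot)$ over $Z$; these are equivalent in effect. Second, and more interestingly, the paper does \emph{not} delete arcs with $c(xw)=c(vz)$. Instead it observes that after the other deletions $\ge\beta n^2/2$ arcs survive, so by pigeonhole some $v\in Y$ receives two surviving arcs $w_1v,w_2v$; since $c(xw_1)\neq c(xw_2)$, at least one of $xw_1vzu,\,xw_2vzu$ avoids the conflict $c(xw)=c(vz)$. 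Your alternative --- bounding the number of bad pairs by $\sum_{a\in A}r_{c(xa)}(z)\le d^-_D(z)\le n$ because the colours $c(xa)$ are pairwise distinct --- is a clean direct count that sidesteps the pigeonhole step entirely. Either method finishes the proof.
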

\begin{proof} Let $Z$ denote the set of vertices $z\in (N^-_D(u)\setminus S)\cap U$ such that $c(zu)\notin M$ 
and let $Z'$ be the set of vertices $w\in Z$ such that for some $\alpha \in M$, $\alpha$ appears on at least $\beta n$ edges $vw$. 
Since for every $\alpha$, $F_\alpha$ is a star forest, $|Z'|\leq |M|/\beta$.  Let $z\in Z\setminus Z'$  and let $Y$ be obtained from $N^-_D(z) \setminus S$ by deleting vertices $v$ such that $c(vz)\in M$ or $c(vz)=c(zu)$. Since $z\in Z'$, there are less than $\beta n$ vertices $v\in N^-_D(z)$ such that $c(vz)\in M$, and by the definition of $D$, there are at most $\sqrt{n}$ vertices $v\in N^-_D(z)$ such that $c(vz)= c(zu)$. Consequently,  
\begin{equation}\label{eq-Ylem5.5}
|Y|\geq (1/2-2\sqrt{\beta})n.
\end{equation}
Let $x\in V(G)\setminus (S\cup \{u\})$ and let $X$ be obtained from 
$N^+_D(x)\setminus S$ by deleting vertices $v$ such that $c(xv)\in M\cup \{c(zu)\}.$ Since for every color $\alpha$ there is at most one vertex $v\in N^+_D(x)$ with $c(xw)=\alpha$, \begin{equation}\label{eq-Xlem5.5}|X|\geq (1/2-2\sqrt{\beta})n.\end{equation} 
By (\ref{eq-Ylem5.5}), (\ref{eq-Xlem5.5}) and Fact \ref{factXY-ext}, there are at least $\beta n^2$ arcs from $X$ to $Y$. We delete three types of arcs from $X$ to $Y$: 
\begin{itemize}
    \item[(i)] arcs of color  $\alpha \in \{c(zu)\} \cup M$,
    \item[(ii)]for every $v\in Y$ arcs $wv$ such that $c(wv)=c(vz)$, 
    \item[(iii)] for every $w\in X$ at most one arc $wv$ such that $c(wv)=c(xw).$
\end{itemize}
The total number of deleted arcs is less than $$(|M|+1)n +n\sqrt{n} +n\leq  (2\sqrt{n}+2)n<
\beta n^2/2.$$ Therefore, there are at least $\beta n^2/2$ arcs that remain. Consequently, there are two paths $P_1=xw_1vzu$, $P_2=xw_2vzu$ such that $w_1,w_2\in X$, $v\in Y$ and $w_1v, w_2v$ were not deleted. Either $c(xw_1)\neq c(vz)$ or $c(xw_2)\neq c(vz)$, and so one of $P_1, P_2$ is a rainbow path that avoids $M$ and $S$.
\end{proof}

Recall that $E_D(V_1, V_2)$ denotes the set of arcs in $D$ from $V_1$ to $V_2.$ If $V_2$ is a singleton $\{u\}$, we simplify it as $E(V_1, u)$ when there is no confusion.

\begin{defn} Let $u\in U$. We say that $u$ is a rainbow link if there exist two disjoint subsets of $N^-_D(u)$, $V_1, V_2$,  such that $|V_i|\geq 2\sqrt{n}$ and $c(E(V_1, u))\cap c(E(V_2, u))=\emptyset.$
\end{defn}

\begin{lemma}\label{lem1-ext}
If there exists a rainbow link $u\in U$, then for every two distinct vertices $x,y\in V(G)$ there is a rainbow $x,y$-path of length at most eight that contains $u$. 
\end{lemma}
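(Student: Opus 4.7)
The plan is to realize the rainbow $x,y$-path of length at most eight through $u$ as a concatenation $P_1 \cup P_2$, where $P_1$ is a rainbow $x,u$-path of length at most four and $P_2$ is a rainbow $y,u$-path of length at most four, vertex-disjoint except at $u$ and color-disjoint. The rainbow-link condition supplies exactly what is missing from a naive double application of Lemma~\ref{lemmaxu-ext}: with $C_1 := c(E_D(V_1,u))$ and $C_2 := c(E_D(V_2,u))$ disjoint, one can force the penultimate vertices of $P_1$ and $P_2$ (the vertices just before $u$) to lie in $V_1$ and $V_2$ respectively, making the last edges $z_1 u$ and $z_2 u$ automatically differ in color.

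First I would apply Lemma~\ref{lemmaxu-ext} to the pair $(x,u)$ with $M = \emptyset$ and $S = \emptyset$, refining the proof of that lemma to select the penultimate vertex $z_1$ from inside $V_1$. Since $|V_1|\ge 2\sqrt{n}$ dominates all the $O(\sqrt{n})$ terms appearing in the bookkeeping of the proof of Lemma~\ref{lemmaxu-ext}---namely the bound $|Z'|\le |M|/\beta$, the $\sqrt{n}$ bound from the definition of $D_G$ on how many in-arcs of $u$ share a given color, and $|W|\le \tfrac{3\sqrt{\beta}}{2}n$ from Fact~\ref{factU-ext}---such a $z_1$ exists together with the corresponding $v_1$ and $w_1$, yielding $P_1 = xw_1 v_1 z_1 u$ with $c(z_1 u)\in C_1$.

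Then, with $M := c(E(P_1))$ (four colors) and $S := V(P_1)\setminus\{u\}$ (four vertices) in hand, I would apply the same refined argument to $(y,u)$ and force $z_2 \in V_2$. Since $c(z_1 u)\in C_1$ and $C_1\cap C_2=\emptyset$, the color $c(z_2 u)\in C_2$ automatically differs from $c(z_1 u)$; the remaining three colors in $M$ exclude at most $O(\sqrt{n})$ vertices of $V_2$ (using the $\sqrt{n}$ bound from the definition of $D_G$), and arranging the intermediate edges of $P_1$ to keep the number of excluded vertices small is a one-sided constraint that $|V_2|\ge 2\sqrt{n}$ comfortably absorbs. Concatenation then produces the rainbow $x,y$-path $xw_1 v_1 z_1 u z_2 v_2 w_2 y$ of length eight through $u$.

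The main obstacle is the simultaneous management of the two constraints inside the refined version of Lemma~\ref{lemmaxu-ext}: that $z_i$ be restricted to $V_i$ and that the chosen colors on all eight edges be pairwise distinct. The rainbow-link threshold $|V_i|\ge 2\sqrt{n}$ is calibrated precisely to the $\sqrt{n}$-scale losses appearing in the proof of Lemma~\ref{lemmaxu-ext}, so tracking those losses carefully in both applications is the key technical step; once that is done, the remaining density arguments (Fact~\ref{factXY-ext} for the middle arc, and the usual deletion of bad arcs) go through exactly as in the proof of Lemma~\ref{lemmaxu-ext}.
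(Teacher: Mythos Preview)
Your approach—forcing the penultimate vertex $z_i$ of each subpath into $V_i$—is different from the paper's and contains a genuine gap.

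The central problem is your claim that ``the remaining three colors in $M$ exclude at most $O(\sqrt n)$ vertices of $V_2$ (using the $\sqrt n$ bound from the definition of $D_G$).'' This misreads Definition~\ref{defnD}: the threshold $d_{F_\alpha}(v)\le\sqrt n$ constrains the \emph{tail} $v$ of an arc, not the head. For a fixed colour $\alpha$ the number of $z\in N^-_D(u)$ with $c(zu)=\alpha$ is bounded only by $d_{F_\alpha}(u)$, which may be as large as $n-1$. Thus a single colour among $c(xw_1),c(w_1v_1),c(v_1z_1)$ could coincide with the colour of almost every arc from $V_2$ into $u$, wiping out your pool for $z_2$. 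Your fallback (``arranging the intermediate edges of $P_1$ to keep the number of excluded vertices small'') is precisely the hard part, and you give no argument for it. A related error: you list $|W|\le\tfrac{3\sqrt\beta}{2}n$ among the ``$O(\sqrt n)$ terms''; since $\beta$ is a fixed constant this quantity is $\Theta(n)$, so $2\sqrt n$ certainly does not dominate it, and your claim that $z_1\in V_1\cap U$ can always be arranged is unjustified.

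The paper resolves exactly this obstruction, not by forcing $z_i\in V_i$, but by a dichotomy on how concentrated the colour set $c(E_D(V_2,u))$ is. If it is concentrated---some deletion of $\le\sqrt n$ vertices $V_2'$ from $V_2$ leaves at most four colours $M$ on $V_2\setminus V_2'$---then one builds $P_1$ first via Lemma~\ref{lemmaxu-ext} \emph{avoiding} $M$ (possible since $c(E_D(V_1,u))\cap M=\emptyset$ supplies the needed in-neighbours of $u$), after which every $z\in V_2\setminus V_2'$ has $c(zu)\in M$ and hence $c(zu)\notin c(E(P_1))$. If it is spread out---no such $V_2'$ exists---then $P_1$ is built with no colour constraint, and the very failure of concentration forces $|\{z\in V_2:c(zu)\notin c(E(P_1))\}|>\sqrt n$ automatically, since otherwise the complement would be a valid $V_2'$. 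Either way the second invocation of Lemma~\ref{lemmaxu-ext} goes through as a black box with $M=c(E(P_1))$ and $S=V(P_1)\setminus\{u\}$. This case split is the idea your sketch is missing.
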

\begin{proof}  We may assume $x, y \in V(G)\setminus \{u\}$.\\
{\it Case 1.} There exists $V_2' \subseteq V_2$ of size at most $\sqrt{n}$ such that $|c(E(V_2\setminus V_2', u))|\leq 4.$

Let $M:=c(E(V_2\setminus V_2', u)).$ Since $c(E(V_1, v))$ does not contain colors from $M$ and $|V_1|\geq 2\sqrt{n}$, by Lemma \ref{lemmaxu-ext} with $S:=\emptyset$, there is a rainbow $x,u$-path $P_1$ of length four that avoids colors from $M.$
 We have $|V_2\setminus V_2'|\geq \sqrt{n}$, and so by Lemma \ref{lemmaxu-ext} with $M:=c(E(P_1))$ and $S:=V(P_1)\setminus \{u\}$ there is a rainbow $y,u$-path $P_2$ of length four that avoids colors on $P_1$ and vertices from $V(P_1)$. Then $P_1\cup P_2$ is a rainbow $x,y$-path of length eight.\\
{\it Case 2.} For every $V_2'\subseteq V_2$ of size at most $\sqrt{n}$, $|c(E(V_2\setminus V_2', u))|> 4.$

Apply Lemma \ref{lemmaxu-ext} with $S=M=\emptyset$ to get a rainbow $x,u$-path $P_1$ of length four. Let $V_2'':= \{z\in V_2: c(zu)\in c(E(P_1))\}$ and let $V_2':= V_2\setminus V_2''.$ Suppose $|V_2'|\leq \sqrt{n}$. Then $V_2'$ is a subset of $V_2$ of size at most $\sqrt{n}$ such that $|c(E(V_2\setminus V_2', u))|\leq |c(E(P_1))|=4.$ Thus $|V_2'|\geq \sqrt{n}$ and by Lemma \ref{lemmaxu-ext} with $M:= c(E(P_1))$ and $S:=V(P_1)\setminus \{u\}$ there is a rainbow $y,u$-path $P_2$ that avoids colors from $c(E(P_1))$ and vertices from $V(P_1)\setminus \{u\}$.
\end{proof}

By Lemma \ref{lem1-ext} we may assume that there is no rainbow link $u\in U$. Consequently, for every $u\in U$, there is a color $c_u$ such that all but at most $2\sqrt{n}$ vertices $z\in N^-_D(u)\cap U$ satisfy $c(zu)=c_u$. 

\begin{lemma}\label{lem2-ext}
 \begin{itemize}
\item[(a)] Let $u\in U$ and let $\alpha \neq c_u$. For every $x\in V(G)$  there is a rainbow $x,u$-path of length at most four that avoids $\alpha$.
\item[(b)] For $w\in W$ if $uw$ is an arc in $D$ for some $u\in U$, then for every $x\in V(G)$ there is a rainbow $x,w$- path in $G$ of length at most five.
\end{itemize}
\end{lemma}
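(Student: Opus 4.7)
The plan is to derive both parts as essentially one-line consequences of Lemma~\ref{lemmaxu-ext}, using the strong concentration of in-neighbors of $u\in U$ on the dominant color $c_u$ guaranteed by the absence of a rainbow link.

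For part (a), the case $x=u$ is trivial via the length-zero path. For $x\neq u$ I would apply Lemma~\ref{lemmaxu-ext} to $u$ with $M:=\{\alpha\}$ and $S:=\emptyset$; the side condition $|M\cup S|\le \sqrt{n}$ is immediate. Combining $d^-_D(u)\geq (1/2-\sqrt{\beta})n$ with $|W|\leq 3\sqrt{\beta}n/2$ from Fact~\ref{factU-ext} yields $|N^-_D(u)\cap U|\geq (1/2-5\sqrt{\beta}/2)n$, and by the standing assumption (the paragraph preceding Lemma~\ref{lem2-ext}) all but at most $2\sqrt{n}$ of these vertices $z$ satisfy $c(zu)=c_u\neq \alpha$. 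Hence the number of $z\in N^-_D(u)\cap U$ with $c(zu)\notin M$ comfortably exceeds $|M|/\beta+|S|+1=1/\beta+1$, and the lemma outputs a rainbow $x,u$-path of length four that avoids $\alpha$.

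For part (b), the crucial preliminary observation is that $c(uw)\neq c_u$. Indeed, the count used in part (a) provides at least $(1/2-5\sqrt{\beta}/2)n-2\sqrt{n}$ arcs into $u$ colored $c_u$; each contributes to $d_{F_{c_u}}(u)$, so $d_{F_{c_u}}(u)>\sqrt{n}$ for $n$ large, which by Definition~\ref{defnD} forbids any outgoing arc of $u$ in $D$ from carrying color $c_u$. The cases $x=w$ and $x=u$ are handled by the trivial path and by the single arc $uw$, respectively. For $x\notin\{u,w\}$, I would invoke Lemma~\ref{lemmaxu-ext} with $M:=\{c(uw)\}$ and $S:=\{w\}$; the same counting supplies more than $|M|/\beta+|S|+1=1/\beta+2$ eligible $z$, so the lemma produces a rainbow $x,u$-path $P$ of length four that avoids the color $c(uw)$ and the vertex $w$. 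Appending the arc $uw$ to $P$ gives the desired rainbow $x,w$-path of length five.

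The only content beyond a direct invocation of Lemma~\ref{lemmaxu-ext} is the identity $c(uw)\neq c_u$, and I expect this to be the main (albeit mild) technical point: it is the one place where the $\sqrt{n}$ threshold baked into Definition~\ref{defnD} must be paired with the large $U$-in-degree of $u$ in color $c_u$ to force $c_u$ out of the colors appearing on outgoing arcs of $u$ in $D$.
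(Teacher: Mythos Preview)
Your proof is correct and follows essentially the same route as the paper: both parts are direct applications of Lemma~\ref{lemmaxu-ext} with $M=\{\alpha\},\,S=\emptyset$ in (a) and $M=\{c(uw)\},\,S=\{w\}$ in (b), after noting $c(uw)\neq c_u$. The paper dispatches the latter in one phrase (``by the definition of $D$''), while you spell out the underlying reason that $d_{F_{c_u}}(u)>\sqrt{n}$; you also verify the counting hypothesis of Lemma~\ref{lemmaxu-ext} and the degenerate cases $x\in\{u,w\}$ explicitly, which the paper leaves implicit.
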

\begin{proof}Part (a) follows from Lemma \ref{lemmaxu-ext} with $S:=\emptyset$ and $M:=\{\alpha\}$. 
For (b), suppose $uw$ is an arc with $u\in U$ and let $\alpha:= c(uw)$. Note that $c_u\neq \alpha$ by the definition of $D$. Let $M:=\{\alpha\}$ and let $S:=\{w\}$. By Lemma \ref{lemmaxu-ext} there exists a rainbow $x,u$ path of length at most four that avoids $\alpha$ and does not contain $w$. 
\end{proof}

Let $\gamma:=\sqrt{\beta}/16$ and let $W':=\{w\in W: |N^-_D(w)\cap U|\geq \gamma n\}$.

\begin{lemma}\label{lem3-ext}Let $w\in W'$ and $v\in W\setminus W'$ be such that $wv$ is an arc in $D$. Then for every $x\in V(G)$ there is a rainbow $x,v$-path of length at most six.
\end{lemma}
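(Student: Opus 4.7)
The strategy is to reduce to Lemma~\ref{lemmaxu-ext} applied to a carefully chosen vertex $u \in N^-_D(w)\cap U$ and then to extend the resulting rainbow $x,u$-path by the arcs $u\to w\to v$. Since $w\in W'$ we have $|N^-_D(w)\cap U|\geq \gamma n$, so there is ample room to choose $u$ subject to three conditions: $u\neq x$, $c(uw)\neq c(wv)$, and $c_u\neq c(wv)$. Once such a $u$ is fixed, apply Lemma~\ref{lemmaxu-ext} with $M:=\{c(uw), c(wv)\}$ and $S:=\{w,v\}$ to produce a rainbow $x,u$-path $P$ of length at most four that avoids the colors in $M$ and the vertices in $S$; then $P$ together with the arcs $uw$ and $wv$ gives the desired rainbow $x,v$-path of length at most six.

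The main obstacle is to control the bad choices of $u$, and in particular to bound the number of $u\in U$ with $c_u=c(wv)$. The number of $u$ with $c(uw)=c(wv)$ is at most $\sqrt n$ by the fact recalled before Lemma~\ref{lemmaxu-ext}. For a fixed color $\alpha$, I claim that at most two vertices $u\in U$ satisfy $c_u=\alpha$: indeed, $c_u=\alpha$ forces at least $(1/2-5\sqrt\beta/2)n-2\sqrt n$ vertices $z\in N^-_D(u)\cap U$ to have $c(zu)=\alpha$, and each such $z$ satisfies $d_{F_\alpha}(z)\leq \sqrt n$ by the definition of $D_G$, so in the star forest $F_\alpha$ the vertex $u$ must be the center of a star containing all these $z$ as leaves. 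Since a star forest on $n$ vertices contains at most two disjoint stars whose sizes exceed $n/3$, the claim follows. Hence at most $\sqrt n + 3$ candidates in $N^-_D(w)\cap U$ are bad, and since $|N^-_D(w)\cap U|\geq \gamma n = \sqrt\beta\, n/16\gg \sqrt n$, a good $u$ exists.

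It remains to verify the hypothesis of Lemma~\ref{lemmaxu-ext} for this $u$. Observe that $c_u\neq c(uw)$ is automatic: the argument above yields $d_{F_{c_u}}(u) > \sqrt n$, so by Definition~\ref{defnD} no out-arc of $u$ in $D$ carries color $c_u$. Combined with the choice $c_u\neq c(wv)$, this gives $c_u\notin M$. Consequently all but at most $2\sqrt n$ of the at least $(1/2-5\sqrt\beta/2)n$ vertices $z\in N^-_D(u)\cap U$ satisfy $c(zu)=c_u\notin M$, which easily exceeds $|M|/\beta+|S|+1 = 2/\beta+3$ since $1/\beta\ll n$. The trivial boundary cases $x=v$ (empty path) and $x=w$ (use the single edge $wv$) are immediate; note also that $u\notin\{v,w\}$ automatically, because $u\in U$ while $v,w\in W$.
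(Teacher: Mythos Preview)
Your proof is correct and follows essentially the same route as the paper: choose $u\in N^-_D(w)\cap U$ with $c(uw)\neq c(wv)$ and $c_u\neq c(wv)$, apply Lemma~\ref{lemmaxu-ext} with $M=\{c(uw),c(wv)\}$ and $S=\{v,w\}$, and append $uw$ and $wv$. Your treatment is in fact more careful than the paper's, since you explicitly verify the hypothesis of Lemma~\ref{lemmaxu-ext} (in particular that $c_u\notin M$ because $d_{F_{c_u}}(u)>\sqrt n$) and handle the boundary cases $x\in\{v,w\}$ and $x=u$, which the paper leaves implicit.
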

\begin{proof} Let $\alpha:=c(wv).$ Since $w\in W'$ there are at least $\gamma n - \sqrt{n}> \gamma n/2$  vertices $u\in N^-_D(w)\cap U$ such that $c(uw)\neq \alpha.$ Since for every $u\in U$, $|c^{-1}(c_u)| \geq (1/2-2\sqrt{\beta})n$, there are at most two vertices $u\in N^-_D(w)\cap U$ such that $c_u= \alpha$. Let $u\in N^-_D(w)\cap U$ be such that $\alpha\notin \{c(uw), c_u\}.$
Now apply Lemma \ref{lemmaxu-ext} with $M:=\{\alpha, c(uw)\}$ and $S:= \{v,w\}$ to get a rainbow $x,u$-path $P$ of length at most four that avoids colors $\alpha$ and $c(uw)$ and does not contain $v$ or $w$. Then $Pwv$ is a rainbow $x,v$-path of length at most six. 
\end{proof}
\begin{lemma}\label{ext-case-W-W'}
If $|W\setminus W'|\leq 1$, then for any two vertices $x,y\in V(G)$ there is a rainbow $x,y$-path of length at most five. 
\end{lemma}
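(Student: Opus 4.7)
Proof plan. I will perform a case analysis on $y$. If $y \in U$, then Lemma~\ref{lem2-ext}(a), applied with any color $\alpha \neq c_y$, yields a rainbow $x,y$-path of length at most four. If $y \in W'$, then by definition $N^-_D(y) \cap U \neq \emptyset$, and Lemma~\ref{lem2-ext}(b) gives a rainbow $x,y$-path of length at most five. Because $|W\setminus W'| \le 1$, the only remaining possibility is that $y$ equals the unique vertex $y_0 \in W\setminus W'$, and even here Lemma~\ref{lem2-ext}(b) is enough provided $N^-_D(y_0) \cap U \neq \emptyset$.

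The genuine difficulty is therefore the subcase $y = y_0$ with $N^-_D(y_0) \cap U = \emptyset$. My plan is to pick some $u \in N_G(y_0)$, produce a rainbow $x,u$-path of length four in $G$ that avoids the color $c(uy_0)$ and the vertex $y_0$, and then append the edge $\{u, y_0\}$ of color $c(uy_0)$ to obtain a rainbow $x,y_0$-path of length five. The natural tool is Lemma~\ref{lemmaxu-ext} with $M = \{c(uy_0)\}$ and $S = \{y_0\}$, whose hypothesis demands at least $1/\beta + 2$ vertices $z \in N^-_D(u) \cap U$ with $c(zu) \neq c(uy_0)$. This is automatic whenever $c(uy_0) \neq c_u$, because the $c_u$-colored in-arcs into $u$ from $U$ then contribute at least $(1/2 - 3\sqrt{\beta})n - 2\sqrt{n}$ suitable witnesses.

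The main obstacle is the possibility that $c(uy_0) = c_u$ for every $u \in N_G(y_0) \cap U$, so that $M = \{c_u\}$ is exactly the dominant color of $u$'s in-arcs and the ``other-color'' in-arcs into $u$ could number fewer than $1/\beta + 2$. To bypass this I will adapt Lemma~\ref{lemmaxu-ext} by relaxing the condition $z \in N^-_D(u) \cap U$ to $z \in N_G(u) \cap U$. In the original proof, $z \in N^-_D(u)$ is used solely to ensure that $\{z,u\}$ is an edge of $G$, while the requirement $z \in U$ alone already delivers $|N^-_D(z)| \geq (1/2 - \sqrt{\beta})n$ and hence $|Y| \geq (1/2 - 2\sqrt{\beta})n$; every subsequent step (the application of Fact~\ref{factXY-ext} to $X \subseteq N^+_D(x)$ and $Y \subseteq N^-_D(z)$ together with the three rounds of arc deletion) then goes through unchanged. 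With this strengthened lemma I only need some $u \in N_G(y_0)$ having at least $1/\beta + 2$ vertices $z \in N_G(u) \cap U$ with $c(zu) \neq c(uy_0)$. Since $d^c(u) \geq n/2$, the vertex $u$ has at least $n/2 - 1$ distinct neighbors through colors other than $c(uy_0)$, and at most $|W| \leq 3\sqrt{\beta}n/2$ of these lie outside $U$, so the required count is amply satisfied for any neighbor $u$ of $y_0$ in $G$. Such a $u$ exists because $|N_G(y_0)| \geq d^c(y_0) \geq n/2$, and appending the edge $\{u, y_0\}$ to the resulting length-four path produces the desired rainbow $x, y_0$-path of length at most five.
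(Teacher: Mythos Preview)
Your case analysis is correct for $y \in U$ and $y \in W'$, and your reduction of the remaining case to $y = y_0 \in W \setminus W'$ with $N^-_D(y_0)\cap U = \emptyset$ is fine. However, your proposed modification of Lemma~\ref{lemmaxu-ext} contains a real gap. You claim that in the proof of that lemma the hypothesis $z \in N^-_D(u)$ is used \emph{solely} to guarantee $\{z,u\}\in E(G)$. This is not so. The step ``by the definition of $D$, there are at most $\sqrt{n}$ vertices $v\in N^-_D(z)$ such that $c(vz)= c(zu)$'' relies on the fact that $zu \in E(D)$ forces $d_{F_{c(zu)}}(z)\le \sqrt{n}$, and it is this degree bound that limits the number of in-arcs to $z$ of colour $c(zu)$ to at most $\sqrt{n}$. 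If you only know $\{z,u\}\in E(G)$, then $z$ may well be the centre of a large star in $F_{c(zu)}$, and then arbitrarily many $v\in N^-_D(z)$ can satisfy $c(vz)=c(zu)$; the lower bound $|Y|\ge (1/2-2\sqrt{\beta})n$ no longer follows, and Fact~\ref{factXY-ext} cannot be applied.

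The paper sidesteps this difficulty entirely by exploiting symmetry: since $|W\setminus W'|\le 1$, at least one of $x,y$ lies in $U\cup W'$; say it is $x$. Then $N^-_D(x)\cap U\neq\emptyset$ (trivially if $x\in U$, and by the definition of $W'$ if $x\in W'$), so Lemma~\ref{lem2-ext} yields a rainbow $y,x$-path of length at most five, which is the same undirected object as a rainbow $x,y$-path. In other words, your ``genuine difficulty'' case never needs to be handled directly---you can simply swap the roles of $x$ and $y$.
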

\begin{proof} Let $x,y\in V(G)$, and suppose $x\in U\cup W'$. Then $N^-_D(x)\cap U\neq \emptyset$ and so by Lemma \ref{lem2-ext}, there is a rainbow $y,x$-path of length at most five.
\end{proof}
For the rest of the argument, we shall assume that there exist two vertices in $G$ such that there is no rainbow path between them of length at most nine, and so, in particular in view of Lemma \ref{ext-case-W-W'},  $|W\setminus W'|\geq 2.$
\begin{defn}
Let $D'=(V(G),E')$ be the following digraph obtained from $D=(V(G), E)$. Let $E\subseteq E'$ and, in addition, for every $\alpha \in c(E(G))$ and $v\in V(G)$ if $\sqrt{n}< d_{F_\alpha}(v)$ then choose one vertex $w\in N_{F_{\alpha}}(v)$ and add $vw$ to $E'$. Moreover, if possible, choose $w$ to be in $U.$
\end{defn}
We have \begin{equation}\label{deltaD'}
\delta^+(D')=\delta^c(G)\geq \frac{n}{2}.
\end{equation}
We extend the definition of $G^*$ to account for the additional edges in $D'$ and set $G^*:= G^*_{D'}= (V(D'), F)$ where $\{u,v\}\in F$ when both $uv$ and $vu$ are in $D'.$

\begin{lemma}\label{lem4-ext} Let $r:=|E(G^*[U])|/|U|$.
\begin{enumerate}
\item[(a)] If $|W\setminus W'|\geq 3$, then $r>3/2$.
\item[(b)] If $|W\setminus W'|=2$ and $|W'|\geq 1$ then $r>3/2.$
\end{enumerate}
\end{lemma}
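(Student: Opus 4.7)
The plan is to lower bound $|E(G^*[U])|$ by combining a ``canonical'' contribution from dominant-color big stars with an ``extra'' contribution forced by the vertices in $W\setminus W'$, and then tune the counting so that the cases (a) and (b) both push the total strictly past $3|U|/2$.

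First, I would establish the canonical count $|E(G^*[U])| \ge |U|$ as follows. Since there is no rainbow link, for each $u \in U$ almost all in-arcs of $D$ from $U$ to $u$ have the color $c_u$, so $u$ is the center of a star in $F_{c_u}$ with at least $(1/2 - \sqrt{\beta})n - |W| - 2\sqrt n \ge n/3$ leaves in $U$; in particular $d_{F_{c_u}}(u) > \sqrt n$. By the $U$-preference clause of Definition 5.6, the unique leaf $\ell_u$ chosen by $u$ in $F_{c_u}$ lies in $U$, and since $\ell_u u \in D$ automatically (leaf has unique neighbor), we obtain $\{u, \ell_u\} \in E(G^*[U])$. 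The edges $\{u, \ell_u\}$ are distinct for different $u \in U$, because otherwise some $u' \in U$ would be simultaneously the center of one big star and a leaf of another in the same star forest $F_{c_u} = F_{c_{u'}}$, contradicting the star-forest structure.

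Second, I would use each $v \in W \setminus W'$ to produce many additional $G^*[U]$-edges. Since $d^c(v) = d_{D'}^+(v) \ge n/2$, at least $n/2 - |W|$ out-arcs of $v$ in $D'$ land in $U$. Each such out-arc corresponds either to (i) $v$ being a leaf of some color-$\alpha$ star centered at $c_\alpha \in U$, or (ii) $v$ being a center with a chosen $U$-leaf. The total contribution of (ii) over all colors is bounded by the total number of $U$-leaves of $v$-centered stars, which is at most $|N_D^-(v) \cap U| < \gamma n$. Hence the set $L_v^U$ of colors $\alpha$ from case (i) satisfies $|L_v^U| \ge n/2 - |W| - \gamma n$. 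For each $\alpha \in L_v^U$, the star contributes a $G^*[U]$-edge $\{c_\alpha, \ell_{c_\alpha}^\alpha\}$ provided $\ell_{c_\alpha}^\alpha \in U$; using Lemma~\ref{lem1-ext} style analysis I would show that the ``bad'' cases (chosen leaf in $W$) are few: for big $c_\alpha$, the $U$-preference forces a $U$-leaf unless the whole star is in $W$, and for small $c_\alpha$, choosing $v$ adds to $|N_D^-(v)\cap U|$ so occurs at most $\gamma n$ times. This leaves a ``good'' subset $\mathcal F_v \subseteq L_v^U$ of size at least $n/2 - O(\sqrt\beta\,n)$ producing $G^*[U]$-edges.

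Third, I would double count to conclude. For each $v \in W \setminus W'$, the map $\alpha \mapsto \{c_\alpha, \ell_{c_\alpha}^\alpha\}$ sends $\mathcal F_v$ into $E(G^*[U])$. A $G^*[U]$-edge arising from the star $S$ in color $\alpha$ is hit by at most $|S \cap (W \setminus W')| \le |W \setminus W'|$ vertices $v$, so
\[ |W \setminus W'| \cdot \bigl(n/2 - O(\sqrt\beta\,n)\bigr) \;\le\; \sum_{v \in W \setminus W'} |\mathcal F_v| \;\le\; |W \setminus W'| \cdot |E(G^*[U]) \cap \mathcal F|, \]
where $\mathcal F$ is the set of edges contributed in Step 2. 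Excluding the at most $|W|$ dominant-star edges per $u$ that may double-count with Step 1, and combining with the canonical $|U|$ edges, I obtain $|E(G^*[U])| \ge |U| + n/2 - O(\sqrt\beta n)$, which is strictly greater than $3|U|/2$ since $|U| = n - |W| \le n$. In case (b), where $|W \setminus W'| = 2$, the margin is tighter, and I would also exploit the fact that $|W'| \ge 1$ to add a small but decisive extra contribution from the out-arcs of $W'$-vertices into $U$ (whose in-arcs from $U$ are many but whose out-arcs to $U$ still force analogous leaf-incidences).

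The main obstacle will be separating the extra edges counted in Step 2 from the canonical ones in Step 1 and controlling the multiplicity in the double count: the bound $|S \cap (W\setminus W')| \le |W \setminus W'|$ is crude, so I expect the subtle step is to bound canonical overlaps by $|W|$ per $u \in U$ and to verify that the slack $O(\sqrt\beta n)$ remains strictly less than $|U|/2$ for all $\beta$ small enough. Case (b) will require the most care, since with only two $W \setminus W'$-witnesses the double count is on the edge and the $W'$-vertex contribution must be included explicitly to complete the argument.
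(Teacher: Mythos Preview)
Your approach is very different from the paper's, and it has two genuine gaps.

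The paper's proof is a four-line double count that does not look at the colours $c_u$ or the star structure at all. One simply bounds
\[
\frac{n}{2}\,|U|\ \le\ \sum_{v\in U} d_{D'}^{+}(v)\ \le\ \binom{|U|}{2}+r|U|+(1/2-\sqrt\beta)\,n\,|W'|+\gamma n\,|W\setminus W'|,
\]
where the middle term counts arcs of $D'$ inside $U$ (a pair contributes two arcs exactly when it lies in $G^*[U]$), and the last two terms bound arcs from $U$ into $W'$ and $W\setminus W'$ using only the definitions of $W$ and $W'$. Dividing by $|U|$ and using $n=|U|+|W'|+|W\setminus W'|$ with $|U|\ge(1-3\sqrt\beta/2)n$ gives $\sqrt\beta|W'|/4+(1/2-2\gamma)|W\setminus W'|+1/2<r$, from which (a) and (b) drop out immediately.

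The first gap in your argument is numerical. Even granting your claimed bound $|E(G^*[U])|\ge |U|+n/2-O(\sqrt\beta\,n)$, the conclusion $r>3/2$ is equivalent to $n-O(\sqrt\beta\,n)>|U|=n-|W|$, i.e.\ $|W|>O(\sqrt\beta\,n)$. But in the relevant cases $|W|$ is a constant (e.g.\ $|W\setminus W'|=3$, $|W'|=0$), so this inequality is false for large $n$. The point of the lemma is precisely that a handful of vertices in $W\setminus W'$ already force $r$ past $3/2$; an error term of order $\sqrt\beta\,n$ swamps that.

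The second gap is the disjointness of your Step~1 and Step~2 edges. A Step~2 edge $\{c_\alpha,\ell_{c_\alpha}^{\alpha}\}$ has colour $\alpha$, and a Step~1 edge $\{u,\ell_u\}$ has colour $c_u$, so they coincide exactly when $\alpha=c_{c_\alpha}$. Nothing prevents this for \emph{every} $\alpha\in L_v^U$: if $c(vu)=c_u$ for each $u\in N_{D'}^{+}(v)\cap U$ (so $v$ is a leaf of the dominant star at $u$), then all of $\mathcal F_v$ lands inside the Step~1 family and contributes no new edges. Your phrase ``excluding the at most $|W|$ dominant-star edges per $u$'' does not bound this overlap, which is indexed by colours $\alpha$ (up to $n/2$ of them), not by vertices of $W$. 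With the overlap uncontrolled, the combined count collapses to $|E(G^*[U])|\ge |U|$, i.e.\ $r\ge 1$.
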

\begin{proof} 
We double count the in edges from $U$. 
\begin{align*}
    n|U|/2 & \le \sum_{v\in U} d^+(v)
     \le \binom{|U|}{2} + r|U| + (1/2-\sqrt{\beta}) n |W'| +\gamma n |W\setminus W'|, 
\end{align*}
where the first inequality is from (\ref{deltaD'}) and the second inequality is from the definition of $U$ and $W'$. Then
$$\frac{n}{2}< \frac{|U|-1}{2}+ (1/2 -\sqrt{\beta}) n|W'|/|U| +\gamma n|W\setminus W'|/|U| +r.$$
By Fact \ref{factU-ext}, $|U|\geq (1-3\sqrt{\beta}/2)n$. Therefore,
$n/|U| \leq 1/(1-3\sqrt{\beta}/2) \leq 1+2\sqrt{\beta}<2$ and $(1/2-\sqrt{\beta})/(1-3\sqrt{\beta}/2) <1/2-\sqrt{\beta}/4$. 
Consequently\begin{equation}\label{eq-lem5.12}\frac{n}{2} < \frac{|U|-1}{2} +(1/2 -\sqrt{\beta}/{4})|W'| +2\gamma|W\setminus W'| +r.\end{equation}
Since $n=|U|+|W'|+|W\setminus W'|$, by (\ref{eq-lem5.12}),
$$\sqrt{\beta} |W'|/4  +(1/2-2\gamma) |W\setminus W'| +\frac{1}{2} <r.$$
If $|W\setminus W'|\geq 3$, then
$r>2 -6\gamma>3/2.$
If $|W \setminus W'|=2$ and $| W'|\geq 1$, then 
$r>3/2 +\sqrt{\beta}/4 -4\gamma =3/2$ since $\gamma =\sqrt{\beta}/16.$
\end{proof}
Recall that by Lemma~\ref{lem1-ext} we may assume that for every $u\in U$ at most $2\sqrt{n}$ vertices $z\in N^-_D(u)\cap U$ satisfy $c(zu)\neq c_u$. Let $C_1=\{c_u : u \in U\}$ and let $C_2= c(E(G^*[U]))\setminus C_1.$

\begin{lemma}\label{lem5-ext}
Let $v, u, u'\in U$ be such that $u\neq u'$, $uv, u'v\in E(G^*[U])$ and $c(uv),c(u'v)\in C_2.$ Then for every $x,y \in V(G)$ there is a rainbow $x,y$-path in $G$ of length at most nine. 
\end{lemma}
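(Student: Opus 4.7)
The plan is to construct the rainbow $x,y$-path as the concatenation $P_1\cdot\{u,v\}\cdot\overline{P_2}$, where $P_1$ is a rainbow $x,u$-path of length four, $P_2$ is a rainbow $y,v$-path of length four, and $\{u,v\}\in E(G)$ is used as the bridge. Both $P_i$ will be produced by Lemma~\ref{lemmaxu-ext}, yielding total length $4+1+4=9$. Since $c(uv)\in C_2$, the bridge color differs from every $c_w$ with $w\in U$; this is what makes the applicability conditions of Lemma~\ref{lemmaxu-ext} tractable. The trivial situation in which $x$ or $y$ already lies in $\{u,u',v\}$ I handle separately by a single application of Lemma~\ref{lemmaxu-ext}, which already delivers a rainbow path of length at most four between any two vertices when one of them is in $U$.

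The main obstacle, and the only place the full hypothesis is used, is reducing to $c_u\neq c_v$. By symmetry between $u$ and $u'$, the only bad configuration is $c_u=c_{u'}=c_v=\alpha$. In this scenario the characterization recalled just before the lemma (all but $2\sqrt n$ in-neighbors of each $w\in U$ use color $c_w$), combined with Fact~\ref{factU-ext}, forces each of $u,u',v$ to have at least $(1/2-O(\sqrt{\beta}))n-2\sqrt n$ neighbors in $F_\alpha$. Hence each of $u,u',v$ is the center of a large $\alpha$-star; since the three centers are distinct, these stars are vertex-disjoint, and counting leaves gives a total of at least $(3/2-O(\sqrt\beta))n$ vertices, which exceeds $n$ once $\beta$ is small enough. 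This is precisely where the hypothesis that \emph{two} distinct vertices $u,u'$ are available becomes crucial.

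Assuming $c_u\neq c_v$, I first apply Lemma~\ref{lemmaxu-ext} to obtain a rainbow $x,u$-path $P_1$ of length four, with forbidden colors $M_1:=\{c_v,c(uv)\}$ and forbidden vertices $S_1:=\{y,v,u'\}$. The hypothesis is met because $c_u\notin M_1$ (using $c_u\neq c_v$ and $c_u\neq c(uv)$, the latter from $c(uv)\in C_2$), so typical in-neighbors of $u$ (those $z\in U$ with $c(zu)=c_u$) number far more than the required $|M_1|/\beta+|S_1|+1=O(1)$. I then apply Lemma~\ref{lemmaxu-ext} to obtain a rainbow $y,v$-path $P_2$ of length four, with forbidden colors $M_2:=c(E(P_1))\cup\{c(uv)\}$ and forbidden vertices $S_2:=V(P_1)\cup\{u'\}$; now $c_v\notin M_2$ because $c_v\neq c(uv)$ and $P_1$ was built to avoid $c_v$, so typical in-neighbors of $v$ again suffice. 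Color-distinctness of the resulting nine-edge walk holds because $c(E(P_2))$ is disjoint from $c(E(P_1))\cup\{c(uv)\}$ by the choice of $M_2$, and vertex-simplicity follows from $V(P_1)\subseteq S_2$ together with the choice of $S_1$. Beyond the star-forest counting above, I expect no further difficulty.
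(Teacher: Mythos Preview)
Your proposal is correct and follows essentially the same route as the paper: reduce to $c_u\neq c_v$ via the star-forest structure of $F_\alpha$, then apply Lemma~\ref{lemmaxu-ext} twice to build a rainbow $x,u$-path $P_1$ avoiding $\{c(uv),c_v\}$ and a rainbow $y,v$-path $P_2$ avoiding $c(E(P_1))\cup\{c(uv)\}$ and $V(P_1)$, and bridge with the edge $uv$. The paper states the reduction in one line (``since $F_\alpha$ is a star forest, $c_u\neq c_v$ or $c_{u'}\neq c_v$''); your three-large-stars counting argument is exactly what justifies that line, and your explicit choice of the avoidance sets $S_1,S_2$ and handling of the degenerate cases $x\in\{u,u',v\}$ or $y\in\{u,u',v\}$ are details the paper glosses over.
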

\begin{proof} Since for every color $\alpha$, $F_\alpha$ is a star forest, $c_u\neq c_v$ or $c_{u'}\neq c_v$. Without loss of generality assume $c_u\neq c_v$. Let $\alpha :=c(uv).$ Since $\alpha\in C_2$, $ \alpha \neq c_u$ and by Lemma \ref{lemmaxu-ext} there is a rainbow $x,u$-path $P_1$ of length four that avoids colors $\alpha, c_v$. Then, by appealing to Lemma \ref{lemmaxu-ext} again, there is a rainbow $y,v$-path $P_2$ that avoids $V(P_1)$ and colors from $c(E(P_1))\cup \{\alpha\}.$ 
\end{proof}
Note that in Lemma \ref{lem5-ext} $c(uv)$ can be equal to $c(u'v).$
Recall that we are assuming that there exist two vertices in $G$ such that there is no rainbow path between them of length at most nine. In particular, by Lemma \ref{ext-case-W-W'} this implies that $|W|\geq 2.$
\begin{lemma}\label{lem6-ext} The following facts hold.
\begin{itemize}
\item[(a)] $W'=\emptyset$ and $W=\{v,w\}$ for some $v\neq w.$
\item[(b)] There are no arcs from $U$ to $W$ in $D'$. 
\item[(c)] If $zx$ is an arc in $D'$ with $z\in W$, then $x\in U$ and $c(zx)=c_x.$
\end{itemize}
\end{lemma}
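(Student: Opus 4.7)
The plan is to handle (a) first and then derive (b) and (c) quickly from it using Lemma~\ref{lemmaxu-ext}. Fix $a,b$ as the specific pair of distinct vertices that, under our running assumption, admit no rainbow path of length at most nine; Lemma~\ref{ext-case-W-W'} then forces $\{a,b\}\subseteq W\setminus W'$.

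For (a) I argue by contradiction. If the conclusion fails, then either $|W\setminus W'|\geq 3$ or $|W'|\geq 1$, so Lemma~\ref{lem4-ext} gives $r=|E(G^*[U])|/|U|>3/2$; by pigeonhole some $v\in U$ has $d_{G^*[U]}(v)\geq 4$, with four neighbors $u_1,u_2,u_3,u_4\in U$. A first observation is that Lemma~\ref{lem5-ext} in fact needs only a single $C_2$-edge $\{u,v\}\in E(G^*[U])$: the step ``$c_u\neq c_v$ or $c_{u'}\neq c_v$'' is automatic since distinct $U$-vertices have distinct dominant colors (two disjoint big stars of one $F_{c_u}$ would cover more than $n$ vertices), so the hypothesis on $u'$ is never used. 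Consequently $G^*[U]$ contains no $C_2$-edge under our contrary assumption, and each of the edges $\{v,u_i\}$ has color $\gamma_i=c_{w_i}\in C_1$ with $w_i\in U$; the $\gamma_i$'s (and the $w_i$'s) are pairwise distinct because they correspond to distinct out-arcs of $v$ in $D'$. If some $i$ has $w_i\notin\{u_i,v\}$, then $c_{u_i}\neq\gamma_i$ and $c_v\neq\gamma_i$, so Lemma~\ref{lemmaxu-ext} applies at $u_i$ with $M_1=\{\gamma_i,c_v\},S_1=\{v\}$ (valid since $c_{u_i}\notin M_1$) to produce a rainbow $a,u_i$-path $P_1$ of length four, and then at $v$ with $M_2=c(E(P_1))\cup\{\gamma_i\},S_2=V(P_1)$ (valid since $c_v\notin M_2$) to produce a rainbow $b,v$-path $P_2$ of length four; concatenating, $a\,P_1\,u_i\,v\,P_2^{-1}\,b$ is a rainbow $a,b$-path of length nine, contradicting our assumption. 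The hard subcase is when every $w_i\in\{u_i,v\}$: at most one $w_i$ can equal $v$, so at least three edges $\{v,u_i\}$ lie in the big star of $F_{c_{u_i}}$ at $u_i$, and the dominant colors $c_{u_i}$ or $c_v$ now land inside any $M$ one would want, blocking the direct pivot. I would resolve this by examining an auxiliary edge $\{u_i,u_j\}$ of $G$ between two of these $u_i$'s: if its color lies outside $\{c_{u_i},c_{u_j},c_v\}$, the same Case~A style argument succeeds through the pivot $u_i\to u_j$; otherwise the stringent color pattern, combined with $d^c(u_i)\geq n/2$ and $|W|\leq 2\sqrt{\beta}n$, forces either a monochromatic path of length three (contradicting the minimality of $E(G)$) or a rainbow link at some $u_i$ (contradicting Lemma~\ref{lem1-ext}). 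This rigidity step is the main obstacle.

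Once (a) is in hand, parts (b) and (c) are short. For (b): given $W=\{a,b\}$, an arc $uw\in D'$ with $u\in U$ and WLOG $w=a$ must satisfy $c(ua)\neq c_u$, because the preference rule in the definition of $D'$ forces $u$'s $c_u$-rep to lie in $U$ (the big star of $F_{c_u}$ at $u$ has abundant $U$-leaves). Then Lemma~\ref{lemmaxu-ext} at $u$ with $M=\{c(ua)\},S=\{a\}$ produces a rainbow $b,u$-path of length four, and prepending the edge $au$ yields a rainbow $a,b$-path of length five, contradicting our assumption. For (c): if $zx\in D'$ with $z\in W$ and also $x\in W$, then $\{x,z\}=\{a,b\}$ and the single edge $ab$ is a rainbow path of length one, a contradiction; hence $x\in U$. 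If $c(zx)=\alpha\neq c_x$, assume WLOG $z=a$; then Lemma~\ref{lemmaxu-ext} at $x$ with $M=\{\alpha\},S=\{a\}$ gives a rainbow $b,x$-path of length four, and prepending $ax$ yields a rainbow $a,b$-path of length five, again a contradiction. Therefore $c(zx)=c_x$.
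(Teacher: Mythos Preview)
Your argument for (a) contains a real error. You assert that distinct $u,v\in U$ have distinct dominant colors because ``two disjoint big stars of one $F_{c_u}$ would cover more than $n$ vertices.'' But each big star has only $(1/2-O(\sqrt{\beta}))n$ leaves, so two of them cover about $(1-O(\sqrt{\beta}))n<n$ vertices; there is no contradiction. In fact the paper later proves (Lemma~\ref{lem7-ext}) that any $C_2$-edge $\{u,u'\}$ \emph{forces} $c_u=c_{u'}$, so equality genuinely occurs. This is exactly why Lemma~\ref{lem5-ext} is stated with two edges $uv,u'v$: if $c_u=c_{u'}=c_v$ then \emph{three} disjoint big stars cover $(3/2-O(\sqrt{\beta}))n>n$ vertices, which is the actual contradiction behind ``$c_u\neq c_v$ or $c_{u'}\neq c_v$.'' Consequently your deduction that $G^*[U]$ has no $C_2$-edges is unjustified, and the subsequent local case analysis at a high-degree vertex --- which you yourself flag as stuck at the ``rigidity step'' --- does not go through.

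The paper's proof of (a) avoids all of this with a short global count. Lemma~\ref{lem5-ext} (as stated) shows the $C_2$-edges of $G^*[U]$ form a matching, hence contribute at most $|U|/2$ edges. For the $C_1$-edges, the key observation you are missing is that each $u\in U$ has exactly one edge of $G^*[U]$ of color $c_u$: the arc from $u$ to its $c_u$-representative lies in $U$ (by the preference rule in $D'$, since $u$ is the center of a large $c_u$-star with many leaves in $U$), and the reverse arc is in $D$ because that representative is a leaf. Summing gives at most $|U|$ edges of $C_1$-type, so $|E(G^*[U])|\leq |U|+|U|/2=3|U|/2$, contradicting $r>3/2$ immediately. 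No vertex-by-vertex case split is needed.

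Your arguments for (b) and (c), specialized to the fixed bad pair $\{a,b\}=W$, are correct and parallel the paper's (which phrases the contradiction more generally as ``any two vertices are connected'' rather than just $a,b$). One small point in (b): the paper first reduces $uz\in D'$ to $uz\in D$ via the preference rule and then invokes Lemma~\ref{lem2-ext}(b) directly, whereas you rederive the needed inequality $c(ua)\neq c_u$; both work.
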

\begin{proof} By Lemma \ref{lem5-ext}, 
$\left(\bigcup_{\alpha\in C_2} E(F_{\alpha})\right)\cap E(G^*[U])$
is a matching. By the definition of $D'$, for every $u\in U$ there is exactly one edge $uv\in E(G^*[U])$ such that $c(uv)=c_u$, and so the number of edges $e\in E(G^*[U])$ such that $c(e)\in C_1$ is equal to $|U|$. Consequently, $|E(G^*[U])|\leq 3|U|/2$ and by Lemma \ref{lem4-ext}, $W'=\emptyset$ and $|W|=2$ which proves (a).

For (b), suppose for some $u\in U$ and $z\in W$, $uz$ is an arc in $D'$. If $uz$ is not an arc in $D$, then there is an arc $u'u$ such that $u'\in U$ and $c(u'u)=c(uz)$. Then however $uu'$ is an option to be added to $D'$ instead of $uz$, contradicting the definition of $D'$. Consequently, $uz$ is in $D$. By Lemma \ref{lem2-ext} (b), for every $x\in V(G)$, there is a rainbow $x,z$-path in $G$ of length at most five. 
In addition, for every $u\in U$ and $z'\in W\setminus \{z\}$, by Lemma \ref{lem2-ext} (a), there is a rainbow $u,z'$-path in $G$ of length at most four. Therefore any two vertices in $G$ are connected by a rainbow path of length at most five. 

To prove (c), first note that if $\{x,z\}=W$ then there is a rainbow path between any two vertices in $G$ of length at most four by Lemma \ref{lem2-ext}. Thus $x\in U$. If $c(zx)\neq c_x$, then by Lemma \ref{lem2-ext} (a), for every $y\in V(G)$ there is a rainbow $y,x$-path of length at most four that avoids color $c(zx)$ and vertex $z$ which gives a rainbow $y,z$-path in $G$ of length at most five.
\end{proof}
\begin{lemma}\label{lem7-ext}
Let $u,u'\in U$ be such that $uu'\in E(G^*[U])$ and $c(uu')\in C_2$. Then $c_u=c_{u'}$.
\end{lemma}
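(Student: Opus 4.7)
The plan is to argue by contradiction: suppose $c_u \ne c_{u'}$, and let $\alpha := c(uu')$. Since $c(uu') \in C_2$, we have $\alpha \ne c_w$ for every $w \in U$; in particular $\alpha \notin \{c_u, c_{u'}\}$. Under the running assumption of this section that some pair of vertices in $G$ is not joined by a rainbow path of length at most nine, it will suffice to exhibit, under the hypothesis $c_u \ne c_{u'}$, a rainbow $x, y$-path of length at most nine for every pair of distinct vertices $x, y \in V(G)$. The construction will use the edge $uu'$ of color $\alpha$ as a bridge, built from a rainbow length-four leg $P_1$ from $x$ to $u$ and a rainbow length-four leg $P_2$ from $y$ to $u'$ with disjoint internal vertices and disjoint color sets, and all colors distinct from $\alpha$.

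To produce $P_1$, the first step is to apply Lemma \ref{lemmaxu-ext} with target $u$, forbidden colors $M := \{\alpha, c_{u'}\}$, and forbidden vertices $S := \{u'\}$. Since $c_u \ne \alpha$ (because $\alpha \in C_2$) and $c_u \ne c_{u'}$ (our contradiction hypothesis), we have $c_u \notin M$, so by the definition of $c_u$ at least $(1/2 - \sqrt{\beta})n - 2\sqrt{n} - O(1)$ vertices $z \in N^-_D(u) \cap U$ satisfy $c(zu) = c_u \notin M$, which for large $n$ far exceeds the required $|M|/\beta + |S| + 1$. The lemma then yields a rainbow $x, u$-path $P_1$ of length at most four that uses no color in $M$ and avoids $u'$.

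The second step is to apply Lemma \ref{lemmaxu-ext} again, now with target $u'$, forbidden colors $M' := c(E(P_1)) \cup \{\alpha\}$, and forbidden vertices $S' := V(P_1) \setminus \{u'\}$. By construction $c_{u'} \notin c(E(P_1))$, and $c_{u'} \ne \alpha$, so $c_{u'} \notin M'$; the typical in-neighbors of $u'$ in $U$ (those $z$ with $c(zu') = c_{u'}$) are therefore available, and the hypothesis of Lemma \ref{lemmaxu-ext} is again met since $|M'|, |S'|$ are bounded by absolute constants. This produces a rainbow $y, u'$-path $P_2$ of length at most four whose colors avoid those of $P_1$ and $\alpha$, and whose internal vertices are disjoint from $V(P_1)$. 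Concatenating $P_1$, the edge $uu'$, and the reverse of $P_2$ then yields a rainbow $x, y$-path of length at most nine (the boundary cases $x \in \{u, u'\}$ or $y \in \{u, u'\}$ are easier and handled by truncating one leg), contradicting our standing assumption. The only real obstacle is the bookkeeping of forbidden colors so that both invocations of Lemma \ref{lemmaxu-ext} succeed simultaneously; the key move is to preemptively forbid $c_{u'}$ on $P_1$, which is precisely what the assumption $c_u \ne c_{u'}$ permits (by keeping $c_u$ available as the dominant in-color at $u$) and which frees up the abundant $c_{u'}$-colored in-edges at $u'$ for the second leg.
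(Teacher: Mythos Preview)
Your proof is correct and follows essentially the same approach as the paper's: argue by contradiction, set $\alpha = c(uu')$, observe that $c_u, c_{u'}, \alpha$ are pairwise distinct, and apply Lemma~\ref{lemmaxu-ext} twice---first to get a rainbow $x,u$-path $P_1$ avoiding $\{\alpha, c_{u'}\}$ (and the vertex $u'$), then to get a rainbow $y,u'$-path $P_2$ avoiding $c(E(P_1)) \cup \{\alpha\}$ and $V(P_1)$---so that $P_1 \cdot uu' \cdot P_2^{-1}$ is a rainbow $x,y$-path of length nine, contradicting the standing assumption. Your bookkeeping is in fact slightly more careful than the paper's (you explicitly put $u'$ into $S$ for the first call and address the degenerate cases $x,y \in \{u,u'\}$), but the argument is the same.
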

\begin{proof}Suppose $c_u\neq c_{u'}$ and let
$\alpha:= c(uu')$. Then $c_u,c_{u'}, \alpha$ are distinct colors. Let $x, y\in V(G)$. By Lemma \ref{lemmaxu-ext}, there is a rainbow $x,u$-path $P_1$ of length at most fours that avoids colors $\alpha, c_{u'}$ and a rainbow $u',y$-path $P_2$ of length at most four that avoids $c(E(P_1))\cup \{\alpha\}$ and $V(P_1).$
\end{proof}
We are now ready to finish the proof of this extremal case. 
\begin{lemma}\label{ext-type2-lem}
    Let $\beta>0$ and $n\in \mathbb Z^+$ be such that $1/n \ll \beta \ll 1$. If $(G,c)$ is an edge-colored graph on $n$ vertices such that $\delta^c(G)\geq \frac{n}{2}$ which is $\beta$-extremal of type 2, then for every two distinct vertices  $x,y\in V(G)$ there is a rainbow $x,y$-path in $G$ of length at most nine.
\end{lemma}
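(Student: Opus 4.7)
The plan is to show that any two distinct vertices are joined by a rainbow path of length at most nine, using the structural information accumulated through Lemmas~\ref{lem1-ext}--\ref{lem7-ext}.

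First, by Lemma~\ref{lem2-ext}(a) (picking any $\alpha \neq c_u$), for every $x \in V(G)$ and every $u \in U$ there is a rainbow $x,u$-path of length at most four, so every pair of distinct vertices with at least one endpoint in $U$ admits a rainbow path of length at most five. Since Lemma~\ref{lem6-ext}(a) yields $W=\{v,w\}$, the only outstanding case is to construct a rainbow $v,w$-path of length at most nine.

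To build such a path I would proceed as follows. By (\ref{deltaD'}) and Lemma~\ref{lem6-ext}(b),(c), each of $v$ and $w$ has at least $n/2 - 1$ out-arcs in $D'$ going into $U$, and every such arc $vu$ (respectively $wu$) carries color $c_u$. A pigeonhole argument then lets me fix $u_1,u_2 \in U$ with $u_1 \neq u_2$ and $c_{u_1} \neq c_{u_2}$ such that $\{v,u_1\},\{w,u_2\}\in E(G)$ have colors $c_{u_1},c_{u_2}$, respectively. The task reduces to producing a rainbow $u_1,u_2$-path $Q$ of length at most seven in $G - \{v,w\}$ whose color set avoids $\{c_{u_1},c_{u_2}\}$; concatenating with the two boundary edges then yields the desired rainbow $v,w$-path of length at most nine.

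For $Q$, the natural strategy is to locate a pivot edge $\{u',u''\}$ of $G[U]$ disjoint from $\{u_1,u_2\}$ whose color $\alpha$ lies outside $\{c_{u_1},c_{u_2},c_{u'},c_{u''}\}$, and then to apply Lemma~\ref{lemmaxu-ext} twice: first with target $u'$ and source $u_1$ (forbidden color set including $c_{u_1},c_{u_2},\alpha$ and forbidden vertex set containing $v,w,u_2,u''$), and then with target $u''$ and source $u_2$, augmenting $M$ and $S$ by the colors and vertices of the first path. In each application $c_{u'}$ (respectively $c_{u''}$) lies outside $M$, so the overwhelming majority of in-arcs of $u'$ (respectively $u''$) from $U$ in $D$, which carry color $c_{u'}$ (respectively $c_{u''}$), remain admissible and give ample room for the numerical hypothesis of Lemma~\ref{lemmaxu-ext}.

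The main obstacle is establishing the existence of such a pivot edge. A direct application of Lemma~\ref{lemmaxu-ext} with target $u_1$ or $u_2$ and forbidden color $c_{u_1}$ or $c_{u_2}$ fails, since almost all in-arcs of $u_i$ from $U$ in $D$ carry exactly that dominant color; the pivot sidesteps this. To produce the pivot, I would use a counting argument based on $\delta^c(G)\ge n/2$ and the star-forest structure of each color class, together with Lemmas~\ref{lem5-ext} and \ref{lem7-ext}: if every edge of $G[U]$ had color $c_{u'}$ or $c_{u''}$ for one of its endpoints, then recording which endpoint is the star center would define an orientation on $U$ in which every vertex has in-degree at least $n/2-1$, forcing total in-degree at least $|U|(n/2-1)$, which exceeds the maximum $\binom{|U|}{2}$ attainable in a simple orientation on $U=V(G)\setminus\{v,w\}$. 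Reconciling this counting with the small slack it provides, and handling the exceptional role of any stars centered at $v$ or $w$ or at $U$-vertices in colors outside $C_1$, is the technical heart of the argument.
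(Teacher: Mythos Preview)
Your strategy of directly constructing a rainbow $v,w$-path differs from the paper's, and it has a fatal length overrun. Lemma~\ref{lemmaxu-ext} produces paths of length exactly four, not at most four. So your path $Q$ from $u_1$ to $u_2$ via the pivot has length $4+1+4=9$, not at most seven as you claim; adding the two boundary edges $vu_1$ and $u_2w$ gives a path of length eleven. There is no lemma in the paper that supplies shorter connecting paths, and trying to shave two edges off each application of Lemma~\ref{lemmaxu-ext} would require reproving it essentially from scratch. Your pivot-existence argument also has loose ends (you need $c_{u'},c_{u''}\notin\{c_{u_1},c_{u_2}\}$ as well, and the orientation count you sketch does not obviously handle the two forbidden colors $c_{u_1},c_{u_2}$), but these are secondary to the length problem.

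The paper avoids building a $v,w$-path altogether. It argues by contradiction: once Lemma~\ref{lem6-ext} gives $|W|=2$, one counts
\[
|E(G^*[U])|\ \ge\ \tfrac{n}{2}|U|-\binom{|U|}{2}\ =\ \tfrac{3|U|}{2}.
\]
Exactly $|U|$ of these edges carry colors from $C_1$ (one per vertex, by the definition of $D'$), so the $C_2$-colored edges in $G^*[U]$ number at least $|U|/2$; but by Lemma~\ref{lem5-ext} they form a matching, hence exactly a perfect matching, and the underlying graph of $D'[U]$ is complete. Lemma~\ref{lem7-ext} then forces $c_u=c_{u'}$ whenever $uu'$ is a $C_2$-edge, so the values $c_u$ pair up and there are only $|U|/2$ of them. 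Since by Lemma~\ref{lem6-ext}(b),(c) every out-arc of $v\in W$ in $D'$ goes to some $u\in U$ with color $c_u$, this yields $d^c(v)\le |U|/2=n/2-1$, contradicting $\delta^c(G)\ge n/2$. This degree-count contradiction is both shorter and sidesteps the delicate path-length bookkeeping entirely.
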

\begin{proof} Suppose otherwise. Then $|W|\geq 2$. By Lemma \ref{lem6-ext}, $|W|=2$,  and the number of edges in $G^*[U]$ is at least 
$$\frac{n}{2}|U| - {|U|  \choose 2} = |U| \frac{n-|U|+1}{2}=\frac{3|U|}{2}.$$
As a result, there are $|U|$ edges in $G^*[U]$ of colors from $C_1$, 
$\left(\bigcup_{\alpha\in C_2} E(F_{\alpha})\right)\cap E(G^*[U])$
is a perfect matching, $|U|$ is even, and the underlying simple graph of $D'[U]$ is complete.

Moreover, if $c(uu')\in C_2$, then $c_u=c_{u'}$ by Lemma \ref{lem7-ext}. 
Consequently, for $v\in W$, by Lemma \ref{lem6-ext} (b) and (c), $d^c(v)\leq |U|/2= n/2-1$ contradicting (\ref{deltaD'}).
\end{proof}
\section{Proof of the Main Theorem}
We can now combine previous lemmas to prove the main result.
\begin{proof}[Proof of Theorem \ref{main}]
Let $\beta>0$ and $n\in \mathbb Z^+$ be such that Lemma \ref{ext-type1-lem}, Lemma \ref{ext-type2-lem}, and Lemma \ref{non-ext-lem} hold. Let $(G,c)$ be an edge color graph of order $n$ that satisfies $\delta^c(G)\geq n/2$. By deleting edges if necessary, we may assume that $(G,c)$ is edge-minimal. By Lemma \ref{non-ext-lem} if $(G,c)$ is not $\beta$-extremal of type 1 and not $\beta$-extremal of type 2 then for any two distinct vertices $x,y\in V(G)$ there is a rainbow $x,y$-path of length at most eight. If $(G,c)$ is $\beta$-extremal of type 1 or $\beta$-extremal of type 2, then by Lemma \ref{ext-type1-lem} and Lemma \ref{ext-type2-lem} for every $x,y\in V(G)$ with $x\neq y$ there is a rainbow $x,y$-path of length at most nine. 
\end{proof}

\section{Concluding remarks}
In Theorem~\ref{main} we assume that $n$ is sufficiently large. This assumption is not required in Theorem~\ref{thm-FM} but we need it for our stability argument. We believe that this assumption is not necessary. 
Another future direction is to improve the constant length in Theorem~\ref{main} and the coefficient in Corollary~\ref{cor-kconn}. We propose the following two questions:

\begin{question}
    Determiner the smallest integer $\ell$ such that for every sufficiently large $n$, if $(G, c)$ is an edge-colored graph of order $n$ and $\delta^c(G) \geq n/2$, then any two distinct vertices are connected by a rainbow path of length at most $\ell$.
\end{question}

Theorem~\ref{main} implies that $\ell$ is at most nine. However, we did not focus on optimizing this constant in the proof, and one might be able to improve it by looking into the extremal case of type 2.

\begin{question}
    Determine the smallest integer $f(k)$ such that the following holds. For every $k\in \mathbb{Z}^+$ there exists $n_0\in \mathbb{Z}^+$ such that for every $n\geq n_0$, if $(G,c)$ is an edge-colored graph on $n\geq n_0$ vertices and $\delta^c(G)\geq {n}/{2}+f(k)$, then $(G,c)$ is rainbow $k$-connected.  
\end{question}

Corollary~\ref{cor-kconn} implies that $f(k)\le 17(k-1)$, and the following example inspired by the extremal case of type 1 shows that $f(k)$ is a linear function in $k$: Suppose $n$ is even and take two disjoint copies of $K_{n/2}$ where each $K_{n/2}$ receives a rainbow edge-coloring. Add $k-1$ disjoint perfect matchings between the two copies of $K_{n/2}$, each perfect matchings receiving a unique new color. Then the minimum color degree is $n/2 + (k-2)$, and the graph is not rainbow $k$-connected.

\bibliographystyle{siam}
\bibliography{references}

\end{document}